\algrenewcommand\textproc{}
\newtheorem{thm}{Theorem}
\newtheorem{lem}{Lemma}
\newtheorem{prp}{Proposition}
\newtheorem{cor}{Corollary}
\newtheorem{remark}{Remark}
\def\bbA{\mathbb{A}}
\def\C{\mathcal{C}}
\def\Y{\mathcal{Y}}
\def\sS{\mathscr{S}}
\def\I{\mathfrak{I}}
\def\Q{\mathcal{Q}}
\def\Re{\mathbb{R}}
\def\sR{\mathscr{R}}
\def\U{\mathscr{U}}
\def\0{\boldsymbol{0}}
\def\1{\boldsymbol{1}}
\DeclareMathOperator*{\im}{im}
\DeclareMathOperator*{\codim}{codim}
\DeclareMathOperator*{\Span}{Span}
\DeclareMathOperator*{\id}{id}
\DeclareMathOperator*{\height}{height}
\DeclareMathOperator*{\F}{Fl}
\DeclareMathOperator*{\Gr}{Gr}
\DeclareMathOperator*{\Spec}{Spec}
\DeclareMathOperator*{\rank}{rank}
\DeclarePairedDelimiter\floor{\lfloor}{\rfloor}
\begin{document}

\title{Determinantal conditions for homomorphic sensing}

\author{Manolis C. Tsakiris}
\address{School of Information Science and Technology, ShanghaiTech University, No.393 Huaxia Middle Road, Pudong Area, Shanghai, China}
\address{Dipartimento di Matematica,  Universit\`a di Genova, Via Dodecaneso 35, 16146 Genova, Italy}
\email{mtsakiris@shanghaitech.edu.cn}

\begin{abstract}
With $k$ an infinite field and $\tau_1,\tau_2$ endomorphisms of $k^m$, we provide a dimension bound on an open locus of a determinantal scheme, under which, for a general subspace $V \subseteq k^m$ of dimension $n \le m/2$, for $v_1,v_2 \in V$ we have $\tau_1(v_1)=\tau_2(v_2)$ only if $v_1=v_2$. Specializing to permutations composed by coordinate projections, we obtain an abstract proof of the theorem of \cite{unnikrishnan2018unlabeled}. 
\end{abstract}

\maketitle

\section{Introduction}

In a fascinating line of research in signal processing termed \emph{unlabeled sensing}, it has been recently established that uniquely recovering a signal from shuffled and subsampled measurements is possible as long as the number of measurements is at least twice the intrinsic dimension of the signal \citep{unnikrishnan2018unlabeled}, while the source generating the signal is sufficiently exciting. In abstract terms, this says that if $V$ is a general\footnote{The attribute \emph{general} is used in the algebraic geometry sense, to indicate that the claimed property is true for every $V$ on a dense open set of the Grassmannian.} $n$-dimensional linear subspace of $\Re^m$, for some $m \ge 2n$, $\pi_1,\pi_2$ permutations on the $m$ coordinates of $\Re^m$ and $\rho_1,\rho_2$ coordinate projections viewed as endomorphisms, then $\rho_1 \pi_1(v_1) = \rho_2 \pi_2(v_2)$ implies $v_1=v_2$ whenever $v_1,v_2 \in V$, providing that each $\rho_i$ preserves at least $2n$ coordinates.
A similar phenomenon has been identified in real phase retrieval \citep{lv2018real, han2018recovery}. In both cases the proofs involve lengthy combinatorial arguments which show that certain determinants do not vanish. In this paper we provide an abstract justification for this phenomenon, that may very well go under the name \emph{homomorphic sensing}.  

Let $k$ be an infinite field and $\tau_1,\tau_2$ endomorphisms of $k^m$. Let $\rho$ be a linear projection onto $\im(\tau_2)$, that is $\rho$ is an idempotent endomorphism of $k^m$ with $\im(\rho) = \im(\tau_2)$. Let $R,T_1,T_2 \in k^{m \times m}$ be matrix representations of $\rho,\tau_1,\tau_2$ on the canonical basis of $k^m$. Let $k[x]=k[x_1,\dots,x_m]$ be a polynomial ring and $I_{\rho \tau_1, \tau_2}$ the ideal generated by all $2\times 2$ determinants of the $m \times 2$ matrix $[R T_1x \, \, \, T_2 x]$ with $x=x_1,\dots,x_n$ arranged as column vector. Consider the closed subscheme $Y_{\rho \tau_1,\tau_2} = \Spec (k[x] / I_{\rho \tau_1, \tau_2})$ of $\bbA_k^m=\Spec(k[x])$. Its $k$-valued points correspond to $w$'s in $k^m$ with $\rho \tau_1(w), \tau_2(w)$ are linearly dependent. With $V \subseteq k^m$ a $k$-subspace $\overline{V} = \Spec(k[x]/I_V)$ is the closed subscheme of $\bbA_k^m$ corresponding to $V$, where $I_V$ is the vanishing ideal of $V$. The key object is the locally closed subscheme $$U_{\rho \tau_1,\tau_2}  =  Y_{\rho \tau_1,\tau_2} \setminus \overline{\ker(\rho \tau_1 - \tau_2)}\cup \overline{\ker(\rho \tau_1)} \cup \overline{\ker(\tau_2)}$$

Let $\Gr(n,m)$ be the Grassmannian of $n$-dimensional $k$-subspaces of $k^m$, identified by the image of the Pl\"ucker embedding with an irreducible projective variety. Our main result is:

\begin{thm} \label{thm:tau1-tau2}
For $n\le m/2$ suppose $\dim U_{\rho \tau_1, \tau_2} \le m-n,  \dim_k\im(\tau_2)\ge 2n,  \dim_k\im(\tau_1)\ge n$. Then  there is an open dense set $\U \subseteq \Gr(n,m)$ such that for $V \in \U$ and $v_1,v_2 \in V$ we have $\tau_1(v_1) = \tau_2(v_2)$ only if $v_1=v_2$. 
\end{thm}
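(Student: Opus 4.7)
The plan is to show that the ``bad'' locus
\[
\mathcal{B} = \{V \in \Gr(n,m) : \exists\, v_1 \ne v_2 \in V, \ \tau_1(v_1) = \tau_2(v_2)\}
\]
is contained in a proper closed subvariety of $\Gr(n,m)$, so that $\U = \Gr(n,m) \setminus \overline{\mathcal{B}}$ is open dense. I realize $\mathcal{B} = \pi_1(\mathcal{I})$ as the image under the first projection of the incidence variety
\[
\mathcal{I} = \{(V, v_1, v_2) \in \Gr(n,m) \times \bbA_k^m \times \bbA_k^m : v_1, v_2 \in V,\ v_1 \ne v_2,\ \tau_1(v_1) = \tau_2(v_2)\},
\]
and stratify $\mathcal{I} = \mathcal{I}_{\mathrm{dep}} \sqcup \mathcal{I}_{\mathrm{indep}}$ according to whether $v_1, v_2$ are linearly dependent or independent.

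On $\mathcal{I}_{\mathrm{dep}}$ a short case analysis yields
\[
\pi_1(\mathcal{I}_{\mathrm{dep}}) \subseteq \{V : V \cap \ker \tau_1 \ne 0\} \cup \{V : V \cap \ker \tau_2 \ne 0\} \cup \{V : V \cap U_{\rho \tau_1, \tau_2} \ne 0\}.
\]
Indeed, if $v_1 = 0$ (resp.\ $v_2 = 0$) then $v_2 \in V \cap \ker \tau_2$ (resp.\ $v_1 \in V \cap \ker \tau_1$); if both are nonzero and $v_1 = c v_2$ with $c \ne 0, 1$, then applying $\rho$ to $\tau_1(v_1) = \tau_2(v_2)$ and using that $\rho$ fixes $\im \tau_2$ pointwise gives $\rho \tau_1(v_2) = (1/c)\tau_2(v_2)$, whence $v_2 \in V \cap U_{\rho \tau_1, \tau_2}$ provided $v_2 \notin \ker \tau_2$. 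Each of the three cones $\ker \tau_1$, $\ker \tau_2$, $U_{\rho \tau_1, \tau_2}$ has dimension at most $m-n$ by the three hypotheses of the theorem. A standard Schubert count then bounds any incidence locus $\{V : V \cap X \ne 0\}$ for a cone $X$ of dimension $d$ by $(d-1) + (n-1)(m-n)$ (projecting to $\Pr(X)$ and lifting through $\{V \ni x\} \cong \Gr(n-1,m-1)$), which for $d \le m-n$ is at most $n(m-n) - 1$; so each of the three loci above is proper closed in $\Gr(n,m)$.

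On $\mathcal{I}_{\mathrm{indep}}$, I project onto the linear scheme $Z = \ker\bigl[(v_1, v_2) \mapsto \tau_1(v_1) - \tau_2(v_2)\bigr] \subseteq \bbA_k^m \times \bbA_k^m$, of dimension $2m - \dim(\im \tau_1 + \im \tau_2) \le 2m - 2n$ by $\dim \im \tau_2 \ge 2n$. The fiber over a linearly independent pair is the Schubert variety $\{V \supseteq \Span(v_1, v_2)\} \cong \Gr(n-2, m-2)$ of dimension $(n-2)(m-n)$, so $\dim \mathcal{I}_{\mathrm{indep}} \le 2(m-n) + (n-2)(m-n) = n(m-n)$. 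The key refinement is that $\mathcal{I}_{\mathrm{indep}}$ carries the free $k^*$-action $c \cdot (V, v_1, v_2) = (V, c v_1, c v_2)$ whose orbits are $1$-dimensional and contained inside the fibers of $\pi_1$; hence every nonempty fiber of $\pi_1|_{\mathcal{I}_{\mathrm{indep}}}$ has dimension at least $1$, giving $\dim \pi_1(\mathcal{I}_{\mathrm{indep}}) \le \dim \mathcal{I}_{\mathrm{indep}} - 1 \le n(m-n) - 1$.

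The hard part is the case analysis on $\mathcal{I}_{\mathrm{dep}}$: one has to verify that a bad pair with $v_1, v_2$ proportional really lands in the open subscheme $U_{\rho\tau_1,\tau_2}$ and not in one of the excised kernels $\ker(\rho\tau_1 - \tau_2)$, $\ker(\rho\tau_1)$, $\ker(\tau_2)$. It is precisely this bookkeeping that forces the scalar ratio $1/c$ to lie in $k \setminus \{0,1\}$, so that the determinantal hypothesis $\dim U_{\rho \tau_1, \tau_2} \le m-n$ is brought to bear exactly where it is needed, and the three dimension hypotheses of the theorem each do distinct work.
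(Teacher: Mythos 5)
Your proof is correct, but it takes a genuinely different route from the paper's. The paper first reduces to the case $\tau_2=\id$ by restricting to a generic $\ell$-dimensional complement $H$ of $\ker(\tau_2)$ (with $\ell=\dim_k\im(\tau_2)$), transports the bound $\dim U_{\rho\tau_1,\tau_2}\le m-n$ to the endomorphism $\tau_H=(\tau_2|_H)^{-1}\rho\tau_1|_H$ via the generic-linear-section Lemma~\ref{lem:sections-algebraic}, and then proves the resulting single-endomorphism statement (Proposition~\ref{prp:tau}) by an explicit Jordan-basis construction (Lemmas~\ref{lem:mu=n}--\ref{lem:m=2n+c}) exhibiting one subspace $V$ with $\dim(V\oplus\tau(V))=2n$, which certifies nonemptiness of the relevant determinantal open condition; a separate matrix argument handles $\dim E_{\tau,1}\ge n$. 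You instead bound the bad locus directly by an incidence-variety dimension count, and your two strata make the hypotheses do visibly separate work: the proportional stratum is exactly where $\dim U_{\rho\tau_1,\tau_2}\le m-n$ enters (your reduction of a proportional bad pair to a nonzero $k$-point of $U_{\rho\tau_1,\tau_2}$, using $\rho\tau_2=\tau_2$ and $1/c\notin\{0,1\}$, is the right computation), while the independent stratum is controlled purely by $\dim_k\im(\tau_2)\ge 2n$ plus the $k^*$-action, which buys the one missing dimension precisely because $\dim\mathcal{I}_{\mathrm{indep}}$ can equal $\dim\Gr(n,m)$. Two small points you should write out: the irreducible components of $\mathcal{I}_{\mathrm{indep}}$ are stable under the $\mathbb{G}_m$-action because $\mathbb{G}_m$ is connected, which is what lets you apply the fiber-dimension theorem component by component; and the Schubert count in the dependent stratum should be applied to the closure $\overline{U_{\rho\tau_1,\tau_2}}$, a union of irreducible cones of the same dimension, since $U_{\rho\tau_1,\tau_2}$ itself is only locally closed. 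Your route avoids both the Jordan-form combinatorics and Lemma~\ref{lem:sections-algebraic} entirely, at the cost of the paper's more explicit description of the good locus; it also isolates a structural fact the paper's proof obscures, namely that the determinantal hypothesis is only ever needed to rule out proportional bad pairs.
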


By a coordinate projection $\rho$ we mean an endomorphism of $k^m$ which preserves the values of $\rank(\rho)$ coordinates and sets the rest to zero. 

\begin{thm} \label{thm:permutations}
Let $\pi_1,\pi_2$ be permutations on the $m$ coordinates of $k^m$ and $\rho_1,\rho_2$ coordinate projections. Then $\dim U_{\rho_2 \rho_1 \pi_1, \rho_2 \pi_2} \le m-\floor*{\rank(\rho_2)/2}$. 
\end{thm}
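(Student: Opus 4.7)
The plan is to cover $U_{\rho \tau_1, \tau_2}$ by a one-parameter family of linear subspaces of $\bbA_k^m$ and to bound its dimension combinatorially. Write $S_j \subseteq \{1, \ldots, m\}$ for the coordinates preserved by $\rho_j$, and set $r := |S_2| = \rank(\rho_2)$, $T := S_1 \cap S_2$, $T' := S_2 \setminus S_1$. With $\rho = \rho_2$, $\tau_1 = \rho_1 \pi_1$, $\tau_2 = \rho_2 \pi_2$, a direct inspection shows that the $i$th row of $[R T_1 x \;\; T_2 x]$ equals $(0,0)$, $(0,\, x_{\pi_2^{-1}(i)})$, or $(x_{\pi_1^{-1}(i)},\, x_{\pi_2^{-1}(i)})$ according as $i \notin S_2$, $i \in T'$, or $i \in T$. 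At any $k$-point $x$ of $U_{\rho \tau_1, \tau_2}$ both columns are nonzero, not equal, and linearly dependent, so there is a unique $\lambda = \lambda(x) \in k \setminus \{0, 1\}$ with $\rho \tau_1(x) = \lambda \, \tau_2(x)$, placing $x$ in the linear subspace $L_\lambda := \ker(R_2 R_1 \Pi_1 - \lambda \, R_2 \Pi_2)$. I would therefore pass to the incidence scheme
\[
Z \;=\; \{(x, \lambda) : x \in L_\lambda\} \;\subseteq\; \bbA_k^m \times (\bbA_k^1 \setminus \{0, 1\}),
\]
whose first projection restricts to an injection on the preimage of $U_{\rho \tau_1, \tau_2}$, reducing the problem to bounding $\dim Z$.

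The heart of the argument is the dimension of $L_\lambda$ for fixed $\lambda \in k \setminus \{0, 1\}$. I would encode the equations $x_{\pi_1^{-1}(i)} = \lambda \, x_{\pi_2^{-1}(i)}$ ($i \in T$) as a directed graph on $\{1, \ldots, m\}$ with arrows $\pi_1^{-1}(i) \to \pi_2^{-1}(i)$; since $\pi_1$ and $\pi_2$ are injective on $T$, every vertex has in- and out-degree at most one, and the graph decomposes as a disjoint union of paths and cycles. The remaining equations $x_{\pi_2^{-1}(i)} = 0$ ($i \in T'$) zero-mark the vertices of $\pi_2^{-1}(T')$. The key observation is that $\pi_2^{-1}(T') \cap \pi_2^{-1}(T) = \emptyset$, which forces every zero-mark to lie on a source vertex of some path (or on a vertex outside the graph altogether), and never on a cycle vertex or a path-interior vertex. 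Solving the resulting linear system path-by-path and cycle-by-cycle then gives the closed form
\[
\dim L_\lambda \;=\; m - r + \#\bigl\{\text{cycles } \gamma : \lambda^{\ell(\gamma)} = 1\bigr\},
\]
where $\ell(\gamma)$ denotes the length of $\gamma$.

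The exclusion $\lambda \ne 1$ prevents loops (length-one cycles $\pi_1^{-1}(i) = \pi_2^{-1}(i)$) from contributing, so only cycles of length at least two are relevant; such cycles are pairwise edge-disjoint with total edge count at most $|T| \le r$, whence there are at most $\lfloor r/2 \rfloor$ of them. Thus $\dim L_\lambda \le m - r + \lfloor r/2 \rfloor$ for every admissible $\lambda$, and combining with the generic-fiber contribution $(m - r) + 1$ to $\dim Z$ one obtains
\[
\dim Z \;\le\; \max\bigl(m - r + 1,\; m - r + \lfloor r/2 \rfloor\bigr) \;\le\; m - \lfloor r/2 \rfloor
\]
whenever $r \ge 2$; the case $r \le 1$ reduces to the tautology $\dim U_{\rho \tau_1, \tau_2} \le m$. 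I expect the main obstacle to be establishing the closed-form expression for $\dim L_\lambda$: it rests on the disjointness observation that keeps zero-marks away from cycles and path-interiors, together with careful bookkeeping of free coordinates, propagated zeros, and cyclic closure conditions. Once this formula is in hand, the final bound on $\dim Z$ is an elementary edge count.
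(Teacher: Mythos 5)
Your proof is correct, but it takes a genuinely different route from the paper. The paper bounds the determinantal locus directly: after a domino-effect propagation of zeros and a splitting of the coordinates into fixed points, complete cycles and an ``incomplete'' remainder, it invokes commutative algebra --- the eigenspace decomposition of a cyclic permutation (Lemma \ref{lem:single-complete-cycle}) and a regular-sequence/Cohen--Macaulayness argument on the generic determinantal ring, geometrically a chain of hyperplane sections of a rational normal scroll (Lemma \ref{lem:incomplete-cyles-codimension}) --- to get the codimension of each factor. You instead slice the locus by the proportionality factor $\lambda$, so that each slice $L_\lambda$ is a \emph{linear} subspace whose dimension is computed by elementary bookkeeping on the functional graph of the partial bijection $\pi_1^{-1}(i)\mapsto\pi_2^{-1}(i)$, $i\in S_1\cap S_2$ (your zero-marked path sources are exactly the paper's domino effect, your cycles with $\lambda^{\ell}=1$ are the paper's eigenspaces, and your unmarked paths replace the scroll lemma); the incidence scheme $Z$ then costs one extra dimension for the parameter, which you correctly absorb by separating the generic fibers (dimension $m-r$, contributing $m-r+1$) from the finitely many special $\lambda$ with $\lambda^{\ell}=1$ (contributing $m-r+\lfloor r/2\rfloor$ with no base contribution), and $\lfloor r/2\rfloor\le r-1$ for $r\ge 2$ closes the gap. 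What each approach buys: the paper's identifies the actual irreducible structure of the determinantal locus and needs no auxiliary parameter, while yours is entirely elementary linear algebra, handles $\pi_1,\pi_2$ simultaneously without the reduction to a single permutation, and adapts immediately to the signed case needed for Corollary \ref{cor:signed-permutations} (the cycle condition becomes $\lambda^{\ell}=\pm 1$). Two routine points you should make explicit: base change to $\bar k$ before arguing with closed points (dimension is insensitive to field extension, as the paper notes), and the step from ``every closed point of $U_{\rho\tau_1,\tau_2}$ lies in the image of $Z$'' to ``$\dim U_{\rho\tau_1,\tau_2}\le\dim Z$'', which uses constructibility of the image; injectivity of the projection is not actually needed for the inequality.
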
 

Using Theorems \ref{thm:tau1-tau2}-\ref{thm:permutations} we obtain a generalization of the main theorem of \cite{unnikrishnan2018unlabeled}. The generalization consists in allowing one of the projections to preserve at least $n$ coordinates (and not $2n$ for both projections) as well as considering sign changes. We call $\rho: k^m \rightarrow k^m$ a signed coordinate projection, if it is the composition of a coordinate projection with a map represented by a diagonal matrix with $\pm 1$ on the diagonal. 

\begin{cor} \label{cor:signed-permutations}
Let $\mathscr{P}_m$ be the group of permutations on the $m$ coordinates of $k^m$, and $\sR_n, \sR_{2n}, \sS_n, \, \sS_{2n}$ the set of all coordinate projections ($\sR_n, \, \sR_{2n}$) and signed coordinate projections ($\sS_n, \, \sS_{2n}$) of $k^m$, which preserve at least $n$ and $2n$ coordinates respectively, for some $n \le m/2$. Then the following is true for a general $n$-dimensional subspace $V$: if $\rho_1 \pi_1(v_1) = \rho_2 \pi_2 (v_2)$ for $v_1,v_2 \in V$ with $\rho_1 \in \sS_n, \rho_2 \in \sS_{2n}, \pi_1, \pi_2 \in \mathscr{P}_m$, then $v_1 = v_2$ or $v_1 = -v_2$. Moreover, if $\rho_1 \in \sR_n$ and $\rho_2 \in \sR_{2n}$, then $v_1 = v_2$. 
\end{cor}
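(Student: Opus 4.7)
The plan is to deduce Corollary \ref{cor:signed-permutations} from Theorems \ref{thm:tau1-tau2} and \ref{thm:permutations} by a finite-configuration argument. Since $\mathscr{P}_m$, $\sR_n$, $\sR_{2n}$, $\sS_n$, $\sS_{2n}$ are all finite sets, it suffices to produce, for each fixed quadruple $(\pi_1,\pi_2,\rho_1,\rho_2)$ drawn from the relevant sets, an open dense set $\U \subseteq \Gr(n,m)$ on which the desired implication holds for $V \in \U$; the intersection of finitely many open dense sets remains open dense.

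For the unsigned case ($\rho_1 \in \sR_n$, $\rho_2 \in \sR_{2n}$), I set $\tau_1 = \rho_1\pi_1$, $\tau_2 = \rho_2\pi_2$ and take $\rho = \rho_2$, which is idempotent with $\im(\rho) = \im(\tau_2)$ since $\pi_2$ is invertible. The hypotheses of Theorem \ref{thm:tau1-tau2} are then immediate: $\dim_k \im(\tau_i) = \rank(\rho_i)$, and $\dim U_{\rho_2\rho_1\pi_1,\rho_2\pi_2} \le m - \floor*{\rank(\rho_2)/2} \le m-n$ is precisely Theorem \ref{thm:permutations}. The theorem then supplies the open dense $\U$ on which $v_1 = v_2$.

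For the signed case ($\rho_1 \in \sS_n$, $\rho_2 \in \sS_{2n}$), I decompose $\rho_i = D_i\hat{\rho}_i$ with $D_i$ a $\pm 1$ diagonal matrix and $\hat{\rho}_i$ the underlying unsigned coordinate projection, and take $\rho = \hat{\rho}_2$. The key idea is to apply Theorem \ref{thm:tau1-tau2} separately to the two configurations $(\tau_1, \tau_2) = (\rho_1\pi_1, \rho_2\pi_2)$ and $(\tau_1, -\tau_2) = (\rho_1\pi_1, (-\rho_2)\pi_2)$, observing that $-\rho_2 \in \sS_{2n}$ so the second is also a valid signed configuration. A successful application of the first gives $v_1 = v_2$; for the second, setting $u_2 = -v_2 \in V$ rewrites $\rho_1\pi_1(v_1) = \rho_2\pi_2(v_2)$ as $\tau_1(v_1) = -\tau_2(u_2)$, so the theorem forces $v_1 = u_2 = -v_2$. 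Hence, provided that for each signed quadruple at least one of the dimension bounds $\dim U_{\rho\tau_1, \pm\tau_2} \le m-n$ holds, the disjunction $v_1 = \pm v_2$ follows.

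The main obstacle is verifying this disjunctive dimension bound. Since linear dependence is unchanged by negating a column, $Y_{\rho\tau_1, \tau_2} = Y_{\rho\tau_1, -\tau_2}$ as schemes, and the two loci $U_{\rho\tau_1,\pm\tau_2}$ differ only in which kernel $\ker(\rho\tau_1 \mp \tau_2)$ is subtracted. Using $2\tau_2 = (\rho\tau_1 + \tau_2) - (\rho\tau_1 - \tau_2)$ and subadditivity of rank, $\rank(\rho\tau_1+\tau_2) + \rank(\rho\tau_1-\tau_2) \ge \rank(\tau_2) \ge 2n$ when $\mathrm{char}\,k \ne 2$ (in characteristic $2$, signed and unsigned configurations coincide), so at least one of $\dim\ker(\rho\tau_1 \mp \tau_2) \le m-n$ holds. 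The residual stratum of $U$, where $\rho\tau_1 x$ and $\tau_2 x$ are proportional by a scalar outside $\{0,\pm 1\}$, must separately be shown to have dimension at most $m-n$ in the signed setting; I expect this to follow by reworking the combinatorial analysis underlying Theorem \ref{thm:permutations} on the signed $2$-minors, which share the monomial support of the unsigned ones, so that the dimension count carries over. Intersecting the open dense sets thus obtained over all finitely many quadruples completes the proof.
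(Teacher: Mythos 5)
Your reduction of the unsigned case is exactly the paper's. For the signed case you take a genuinely different route: the paper keeps the single configuration $(\rho\tau_1,\tau_2)$, enlarges the excised locus to $U^{\pm}_{\rho_2\rho_1\pi_1,\rho_2\pi_2}=U_{\rho_2\rho_1\pi_1,\rho_2\pi_2}\setminus\overline{\ker(\rho\tau_1+\tau_2)}$, and then modifies Proposition~\ref{prp:tau} so that \emph{both} eigenvalues $1$ and $-1$ are tolerated, which yields the per-pair disjunction $v_1=\pm v_2$ directly. You instead apply Theorem~\ref{thm:tau1-tau2} twice, to $(\tau_1,\tau_2)$ and to $(\tau_1,-\tau_2)$, and use the pigeonhole $\rank(\rho\tau_1+\tau_2)+\rank(\rho\tau_1-\tau_2)\ge\rank(2\tau_2)\ge 2n$ (characteristic $\ne 2$) to guarantee that at least one of the two applications is legitimate, since $U_{\rho\tau_1,\pm\tau_2}$ is covered by $U^{\pm}$ together with a piece of $\overline{\ker(\rho\tau_1\pm\tau_2)}$. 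This buys you the corollary with Theorem~\ref{thm:tau1-tau2} used strictly as a black box --- no reworking of Proposition~\ref{prp:tau} --- at the price of proving the slightly stronger uniform statement that for each fixed configuration one sign works for all pairs, which of course implies the stated disjunction. The logic is sound, including the observation that $-\rho_2\pi_2$ is again an admissible second map with the same image, so the same $\rho$ serves both applications.

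The one step you defer --- that the stratum where $\rho\tau_1 x$ and $\tau_2 x$ are proportional with ratio $\lambda\notin\{0,1,-1\}$ has dimension at most $m-\floor*{\rank(\rho_2)/2}\le m-n$ --- is precisely the claim the paper itself only asserts (``a similar set of arguments as in the proof of Theorem~\ref{thm:permutations}''), so your proposal is no less complete than the published proof on this point. Your expectation that it carries over is justified: Lemmas~\ref{lem:single-complete-cycle} and~\ref{lem:incomplete-cyles-codimension} already incorporate the sign matrix $\Sigma$, and the domino and fixed-point analysis in the proof of Theorem~\ref{thm:permutations} uses only $\lambda\ne 0$ and, at fixed points, $\lambda\ne\sigma_i\in\{1,-1\}$, so excluding $\lambda=-1$ as well makes the codimension count go through verbatim.
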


Restricting our attention to general points, a much simpler argument gives much weaker conditions than in Theorem \ref{thm:tau1-tau2}: 
\begin{prp} \label{prp:generalHS}
Suppose $\tau_1,\tau_2$ have rank at least $n+1$ and are not scalar multiples of each other. Then for a general $n$-dimensional linear subspace $V$ of $k^m$ and $v$ a general point in $V$, we have $\tau_1(v)=\tau_2(v')$ with $v' \in V$ only if $v'=v$. 
\end{prp}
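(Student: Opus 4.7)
My plan is to analyze the equation $\tau_1(v) = \tau_2(v')$ with $v, v' \in V$ via a dichotomy on whether $\tau_1(V) \subseteq \tau_2(V)$ for generic $V$. As a preliminary observation, since $\rank(\tau_i) \ge n+1 > n$, a generic $V \in \Gr(n,m)$ meets $\ker(\tau_i)$ trivially, so both restrictions $\tau_i|_V$ are injective and $\tau_i(V)$ has dimension $n$. I would then introduce the $k$-linear map
\[ \phi_V \colon V \longrightarrow k^m/\tau_2(V), \qquad v \longmapsto \tau_1(v) \bmod \tau_2(V), \]
and consider the closed locus $Z = \{V \in \Gr(n,m) : \phi_V = 0\} = \{V : \tau_1(V) \subseteq \tau_2(V)\}$.

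In the easy case $Z \subsetneq \Gr(n,m)$, on the open dense complement of $Z$ the kernel $\ker(\phi_V)$ is a proper subspace of $V$; a generic $v \in V$ lies outside $\ker(\phi_V)$, so $\tau_1(v) \notin \tau_2(V)$ and no $v' \in V$ satisfies $\tau_1(v) = \tau_2(v')$. The implication then holds vacuously.

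The main obstacle is the hard case $Z = \Gr(n,m)$, i.e.\ $\tau_1(V) \subseteq \tau_2(V)$ for every $V \in \Gr(n,m)$, which I plan to rule out using the non-proportionality hypothesis. Fix any $v \in k^m$ with $\tau_2(v) \neq 0$; I would first show $\tau_1(v) \in \langle \tau_2(v) \rangle$. Suppose not: then $\tau_1(v)$ and $\tau_2(v)$ are linearly independent inside $\im(\tau_2)$, and since $\dim \im(\tau_2) \ge n+1$ I can pick an $n$-dimensional $W \subseteq \im(\tau_2)$ that contains $\tau_2(v)$ but avoids $\tau_1(v)$. Lifting a basis of $W$ to preimages under $\tau_2$ produces an $n$-dimensional $V \ni v$ with $\tau_2(V) = W$; but the hard-case assumption gives $\tau_1(v) \in \tau_1(V) \subseteq \tau_2(V) = W$, contradicting the choice of $W$. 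Hence $\tau_1(v) = c(v)\tau_2(v)$ on the open set $\{\tau_2(v) \neq 0\}$. A standard bilinearity argument using two vectors whose $\tau_2$-images are linearly independent (available since $\rank(\tau_2) \ge 2$) then forces $c(v)$ to be a constant $c$, so $\tau_1 = c\tau_2$, contradicting the hypothesis; the hard case therefore cannot occur.
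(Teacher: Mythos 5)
Your argument is correct, and at its core it is the same kind of elementary linear algebra as the paper's proof: everything reduces to producing one subspace $V$ and one vector at which the relevant image containment fails, after which genericity is automatic because the failure is an open (rank) condition. The organization, however, is inverted. The paper argues directly: non-proportionality yields a $v$ with $\tau_1(v),\tau_2(v)$ linearly independent, and $\rank(\tau_1)\ge n+1$ is used to build $V\ni v$ with $\tau_2(v)\notin\tau_1(V)$. You argue by contradiction: assuming $\tau_1(V)\subseteq\tau_2(V)$ for all $V$, you use $\rank(\tau_2)\ge n+1$ to build $V\ni v$ with $\tau_2(V)=W\not\ni\tau_1(v)$, deduce pointwise proportionality, and conclude $\tau_1=c\tau_2$. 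Your route has two small advantages: it supplies a proof of the step the paper merely asserts (your ``bilinearity argument'' is exactly the contrapositive of the paper's claim that non-proportionality gives some $v$ with $\tau_1(v),\tau_2(v)$ independent), and it tests the containment $\tau_1(v)\in\tau_2(V)$ with $v$, the argument of $\tau_1$, as the general point, which matches the quantifier structure of the statement, whereas the paper's proof as written makes $v'$ the general point. Two bits of polish to add: (i) $Z$ as you define it need not be closed where $\dim\tau_2(V)<n$, so run the dichotomy on the closed locus $\{V:\rank([T_1A\;\;T_2A])\le n\}$, which coincides with $Z$ on the open set where $\tau_2|_V$ is injective --- the only place the containment is invoked; (ii) after showing $c$ is constant off $\ker(\tau_2)$, note that the complement of the proper subspace $\ker(\tau_2)$ spans $k^m$, so $\tau_1-c\tau_2$ vanishes identically, taking care of $\ker(\tau_2)$.
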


The results of the present paper substantiate the technical part of the expository paper of \cite{tsakiris2019homomorphic}. The reader may also find there an application of these notions to image registration. In a short letter \cite{dokmanic2019permutations} studied independently the same problem for $\tau_1,\tau_2$ automorphisms with $\tau_1^{-1}\tau_2$ diagonalizable. I was inspired to work on this problem after Prof. Aldo Conca pointed to me, during his visit in Shanghai in September 2018, eigenspace conditions in the form of Proposition \ref{prp:tau} for diagonalizable endomorphisms. I thank Liangzu Peng for stimulating discussions and comments on the manuscript.

\section{Proof of Theorem \ref{thm:tau1-tau2}}

For a positive integer $s$ set $[s] = \{1,\dots,s\}$ and $[0]=0$. We first consider a special case where one of the endomorphisms is the identity $\id$. Let $\tau$ be the other endomorphism with $T \in k^{m \times m}$ its matrix representation on the canonical basis of $k^m$. Denote by $I_\tau$ the ideal 
of $k[x]$ generated by the $2 \times 2$ determinants of the $2 \times m$ matrix $[Tx \, \, \, x]$. The $k$-valued points of the closed subscheme $Y_\tau = \Spec (k[x]/I_\tau)$ form the union of the eigenspaces of the endomorphism $\tau$ corresponding to eigenvalues that lie in $k$. Set $U_\tau = Y_\tau \setminus \overline{\ker(\tau- \id)}$ the open subscheme of $Y_\tau$ with the locus associated to eigenvalue $1$ removed. We have: 

\begin{prp} \label{prp:tau}
Suppose that $\dim U_\tau \le m-n$ for some $n$ with $m \ge 2n$. Then there is a dense open set $\U \subset \Gr(n,m)$ such that for every $V \in \U$ and $v_1,v_2 \in V$ we have $\tau(v_1) = v_2$ only if $v_1=v_2$.
\end{prp}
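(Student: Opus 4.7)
The plan is to reformulate the conclusion so that an incidence variety argument applies. For $v_1,v_2\in V$ the equation $\tau(v_1)=v_2$ is just the condition $v_1\in V\cap\tau^{-1}(V)$, while the conclusion $v_1=v_2$ amounts to $v_1\in\ker(\tau-\id)$. So it suffices to produce an open dense $\U\subseteq\Gr(n,m)$ over which $V\cap\tau^{-1}(V)\subseteq\ker(\tau-\id)$; equivalently, no nonzero $v\in V$ should satisfy $\tau(v)\in V$ and $\tau(v)\neq v$. I would encode this via the incidence scheme
\[
W=\bigl\{(V,[v])\in\Gr(n,m)\times\mathbb{P}(k^m)\;:\;v\in V,\ \tau(v)\in V,\ v\notin\ker(\tau-\id)\bigr\},
\]
whose defining conditions are projectively well-defined in $[v]$. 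Taking $\U$ to be the complement of the closure of the image of the first projection, the whole task reduces to proving $\dim W<\dim\Gr(n,m)=n(m-n)$, which forces that projection to be non-dominant.

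To estimate $\dim W$ I would switch to the second projection $W\to\mathbb{P}(k^m)$ and stratify by whether $v$ and $\tau(v)$ are linearly independent. In the independent stratum, $[v]$ ranges over an open subset of $\mathbb{P}(k^m)$ of dimension $m-1$, while $V$ must contain the $2$-plane $\langle v,\tau(v)\rangle$, pinning the fiber to a copy of $\Gr(n-2,m-2)$ of dimension $(n-2)(m-n)$; summing and simplifying under $m\ge 2n$ yields at most $n(m-n)-(m-2n+1)\le n(m-n)-1$ (the stratum is empty when $n=1$, which is harmless). In the dependent stratum, $v$ is a non-fixed eigenvector, so $[v]\in\mathbb{P}(U_\tau)$, of dimension at most $m-n-1$ by hypothesis, and the fiber is $\Gr(n-1,m-1)$ of dimension $(n-1)(m-n)$, totalling $n(m-n)-1$. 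Both strata therefore lie strictly below $\dim\Gr(n,m)$, closing the bound.

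The step requiring the most care is the dependent stratum: this is precisely where the hypothesis $\dim U_\tau\le m-n$ is consumed in a tight way, and one must be careful to excise from $W$ only the fixed-vector locus rather than the full eigenspace of $\tau$, for otherwise the bound on $U_\tau$ would not actually be used. The independent stratum, by contrast, is a Schubert-style count relying only on $m\ge 2n$ and the fact that the $n$-planes containing a fixed $2$-plane form a copy of $\Gr(n-2,m-2)$; no deeper input from $\tau$ is needed there.
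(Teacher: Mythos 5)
Your argument is correct, but it takes a genuinely different route from the paper's. The paper's proof is constructive and splits into two cases according to the size of the fixed space: when $\dim_{\bar{k}}E_{\tau,1}\le m-n$ it builds, via three successive Jordan-basis lemmas (Lemmas \ref{lem:mu=n}--\ref{lem:m=2n+c}), an explicit $n$-plane $V$ over $\bar{k}$ with $\dim(V+\tau(V))=2n$, thereby certifying that the open condition $V\cap\tau(V)=0$ cuts out a non-empty (hence dense) open set, on which $\tau(v_1)=v_2$ forces $v_1=v_2=0$; when $\dim E_{\tau,1}\ge n$ it instead exploits the presence of $n$ Jordan blocks for the eigenvalue $1$ to extract $n$ rows of the Jordan matrix equal to canonical vectors and finish by a rank argument. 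The two cases cover everything precisely because $m-n\ge n$. Your incidence correspondence $W\subseteq\Gr(n,m)\times\mathbb{P}^{m-1}$ collapses all of this into one fiber-dimension count: the independent stratum consumes $m\ge 2n$, the dependent stratum consumes $\dim U_\tau\le m-n$, and the eigenvalue-$1$ vectors are simply excised from $W$ because they satisfy the conclusion tautologically, so no case split is needed. What your approach buys is brevity, uniformity, complete avoidance of the Jordan canonical form (which is exactly the style of combinatorial argument the paper says it wants to abstract away), and an immediate adaptation to the signed variant in Corollary \ref{cor:signed-permutations} by also removing $\overline{\ker(\tau+\id)}$ from $W$; what the paper's approach buys is an explicit witness $V$ and the finer structural Lemmas \ref{lem:mu=n}--\ref{lem:m=2n+c}, whose case analysis is reused verbatim at the end of the proof of Corollary \ref{cor:signed-permutations}. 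Two routine points you should make explicit if you write this up: the scheme-theoretic hypothesis $\dim U_\tau\le m-n$ is used as the statement that every eigenspace $E_{\tau,\lambda}$ with $\lambda\neq 1$ over $\bar{k}$ has dimension at most $m-n$ (legitimate, since dimension is insensitive to nilpotents and to base change to $\bar{k}$), and the passage from $\dim W<n(m-n)$ to the existence of a dense open $\U$ uses Chevalley's theorem plus irreducibility of $\Gr(n,m)$, exactly as the paper does at the end of the proof of Theorem \ref{thm:tau1-tau2}.
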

\begin{proof}
See \S \ref{subsection:Proof-prp:tau}. 
\end{proof}

Denote by $k[x]_1$ the $k$-vector space of degree-$1$ homogeneous polynomials in $k[x]$. Write $\overline{\ker(\rho \tau_1-\tau_2)} = \Spec(k[x] / J)$ where $J$ is generated by linear forms $p_\alpha \in k[x]_1, \, \alpha \in [\codim\ker(\rho \tau_1-\tau_2)]$. Similarly, let $q_\beta$'s and $r_\gamma$'s be linear forms generating the vanishing ideals of $\ker(\rho \tau_1)$ and $\ker(\tau_2)$ respectively. Set $h_{\alpha \beta \gamma} = p_\alpha q_\beta r_\gamma$. Then 
$$U_{\rho \tau_1, \tau_2} = \bigcup_{\alpha, \beta, \gamma} \Spec  \big(k[x] / I_{\rho \tau_1,\tau_2}\big)_{h_{\alpha \beta \gamma}} $$
where $\big(k[x] / I_{\rho \tau_1,\tau_2}\big)_{h_{\alpha \beta \gamma}}$ is the localization of $k[x] / I_{\rho \tau_1,\tau_2}$ at the multiplicatively closed set $\{1,h_{\alpha \beta \gamma}, h_{\alpha \beta \gamma}^2,\dots\}$.

Set $\ell = \dim_k \im(\tau_2)$. There is a dense open set $\U_1 \subseteq \Gr(\ell,m)$ such that $H \cap \ker(\tau_2) = 0$ for every $H \in \U_1$. For any such $H$ we have that $\tau_2|_H$ establishes an isomorphism between $H$ and $\im(\tau_2)$. Let $(\tau_2|_{H})^{-1}: \im(\tau_2) \rightarrow H$ be the inverse map. Consider the endomorphism of $H$ given by $\tau_H=(\tau_2|_{H})^{-1} \rho \tau_1|_{H}$. Fixing a basis $B_{\im(\tau_2)} \in k^{m \times \ell}$ of $\im(\tau_2)$ we let $R'$ be the $k^{\ell \times m}$ matrix that sends a vector $\xi \in k^m$ to the coefficients of the representation of $\rho(\xi)$ on the basis $B_{\im(\tau_2)}$ and note that $R'R=R'$. Fix a basis $B_H \in k^{m \times \ell}$ of $H$. Then $\tau_2|_H$ is represented by the invertible matrix $T_{2,H}=R'T_2B_H \in k^{\ell \times \ell}$ and $\tau_H$ by $T_H= T_{2,H}^{-1} R' T_1 B_H \in k^{\ell \times \ell}$. 

Let $k[z] = k[z_1,\dots,z_\ell]$ be a polynomial ring of dimension $\ell$ and consider the surjective ring homomorphism $\psi: k[x] \rightarrow k[z]$ that takes $x$ to $B_H z$. The kernel of $\psi$ is the vanishing ideal $I_H$ of $H$, so that $\psi$ induces a ring isomorphism $k[x]/I_H \cong k[z]$. The further ring isomorphism $k[x]/I_H+(p_\alpha)_\alpha \cong k[z] / (\psi(p_\alpha))_\alpha$ corresponds geometrically to the identification $\ker(\rho \tau_1 - \tau_2) \cap H \cong \ker(\tau_H - \id)$. That is, the $\psi(p_\alpha)$'s generate the vanishing ideal of $\ker(\tau_H - \id)$. Similarly, the $\psi(q_\beta)$'s generate the vanishing ideal of $\ker(\tau_H)$, while the $\psi(r_\gamma)$'s generate the irrelevant ideal $(z_1,\dots,z_\ell)$. Now define $I_{\tau_H}$ to be the ideal of $k[z]$ generated by all $2 \times 2$ determinants of the $\ell \times 2$ matrix $[T_H z \, \, \, z]$. We have:

\begin{lem} \label{lem:2-minors}
$I_{\tau_H}  = \psi(I_{\rho \tau_1,\tau_2})$.
\end{lem}
\begin{proof}
Since $\tau_2 (\tau_2|_{H})^{-1} \rho = \rho$ we have $\psi([R T_1 x \, \, \, T_2 x]) = T_2 B_H [T_H z \, \, \, z]$. Recall that if $C$ is a $2 \times \ell$ row-submatrix of $T_2 B_H$ then $\det(C [T_H z \, \, \, z]) = \sum_{\mathcal{J} \subset [\ell], \#\mathcal{J}=2} \det(C_{\mathcal{J}}) \det({}_{\mathcal{J}}[T_H z \, \, \, z])$, where $C_{\mathcal{J}}$ and ${}_{\mathcal{J}}[T_H z \, \, \, z]$ denote column and row $2 \times 2$ submatrices repsectively, indexed by $\mathcal{J}$. This shows that the ideal of $2 \times 2$ determinants of $\psi([R T_1 x \, \, \, T_2 x])$ is contained in the ideal of $2 \times 2$ determinants of $[T_H z \, \, \, z]$. For the reverse inclusion, note that $T_2 B_H$ has rank $\ell$ and so there is an invertible row-submatrix $A$ of $T_2 B_H$ of size $\ell \times \ell$. It is enough to prove that the ideal of $2 \times 2$ determinants of $A[T_H z \, \, \, z]$ coincides with that of $ [T_H z \, \, \, z]$. The matrix $A$ induces a $k$-automorphism $f: (k[z])^\ell \rightarrow (k[z])^\ell$ given by $u \mapsto Au$. This further induces a $k$-linear map of exterior powers $f^{(2)}: \wedge^2 (k[z])^\ell \rightarrow \wedge^2 (k[z])^\ell$ by taking $u \wedge v$ to $Au \wedge Av$. Note that $u \wedge v$ is the vector of $2 \times 2$ determinants of the matrix $[u \, \, v]$. Similarly $A^{-1}$ induces a $k$-linear map $g^{(2)}: \wedge^2 (k[z])^\ell \rightarrow \wedge^2 (k[z])^\ell$. Since $f^{(2)}, g^{(2)}$ are inverses, the vectors of $2 \times 2$ determinants of $A[T_H z \, \, \, z]$ and $ [T_H z \, \, \, z]$ can be obtained from each other via matrix multiplication over $k$, thus they generate the same ideal. 
\end{proof}

Lemma \ref{lem:2-minors} gives the ring isomorphism $\psi': k[x]/I_{\rho \tau_1,\tau_2}+I_H \cong k[z]/I_{\tau_H}$. Together with the definition of $U_{\tau_H}$ this gives
\begin{align}
U_{\tau_H} \setminus \overline{\ker(\tau_H)}&=\Spec(k[z]/I_{\tau_H})\setminus \overline{\ker(\tau_H-\id)} \cup \overline{\ker(\tau_H)} \nonumber \\
& \cong \bigcup_{\alpha,\beta} \Spec  \big(k[x] / I_{\rho \tau_1,\tau_2}+I_H\big)_{p_\alpha q_\beta}  \nonumber 
\end{align} 
 
Let $\Gr(c,k[x]_1)$ be the Grassmannian of $k$-subspaces $W$ of $k[x]_1$ of dimension $c$. The following is a folklore fact in commutative algebra. 

\begin{lem} \label{lem:sections-algebraic}
Let $I$ be a homogeneous ideal of $k[x]$. Then there exists a dense open set $\U^* \subseteq \Gr(c,k[x]_1)$ such that 
$$\dim \big(k[x]/I+(W)\big) = \max\{ \dim \big(k[x]/I\big) - c, 0 \}$$ for every $W \in \U^*$, with $(W)$ the ideal generated by $W$. 
\end{lem}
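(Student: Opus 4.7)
The plan is to decompose $I$ into minimal primes to reduce to a single prime ideal, then combine Chevalley's upper semicontinuity of fiber dimension with a recursive construction that exhibits one good $W$.

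Let $P_1,\dots,P_r$ be the minimal primes of $I$ and set $d_i = \dim k[x]/P_i$, so $d = \dim k[x]/I = \max_i d_i$. A standard argument with minimal primes gives $\dim k[x]/I+(W) = \max_i \dim k[x]/P_i+(W)$. It therefore suffices to produce, for each $i$, a dense open $\U_i \subseteq \Gr(c,k[x]_1)$ on which $\dim k[x]/P_i+(W) = \max\{d_i-c,0\}$, and take $\U^* := \bigcap_i \U_i$ (a finite intersection of dense opens in the irreducible Grassmannian is dense open). For a fixed minimal prime $P$ with $d' := \dim k[x]/P$, the case $d'=0$ forces $P=(x_1,\dots,x_m)$ (the only homogeneous prime of $k[x]$ of dimension zero), giving $\dim k[x]/P+(W)=0$ trivially, so I may assume $d' \ge 1$.

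Next, I would form the incidence scheme
$$
Z = \{(W,x) \in \Gr(c,k[x]_1) \times \Spec(k[x]/P) : \ell(x)=0 \text{ for all } \ell \in W\},
$$
whose first projection $\pi$ has fiber $\Spec(k[x]/P+(W))$ over $W$. Chevalley's upper semicontinuity of fiber dimension implies that the locus of $W$ with $\dim \pi^{-1}(W) > \max\{d'-c,0\}$ is closed in $\Gr(c,k[x]_1)$, so its complement $\U_i$ is open; non-emptiness (which implies density by irreducibility of the Grassmannian) follows once I exhibit a single $W$ with $\dim k[x]/P+(W) = \max\{d'-c,0\}$.

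Such a $W = \Span(\ell_1,\dots,\ell_c)$ will be built recursively: having chosen $\ell_1,\dots,\ell_{j-1}$, pick $\ell_j \in k[x]_1$ avoiding (i) every minimal prime of $P+(\ell_1,\dots,\ell_{j-1})$ of positive dimension and (ii) the span of the previous $\ell_i$'s. Each excluded condition cuts out a proper $k$-linear subspace of $k[x]_1$, and since $k$ is infinite a finite union of proper $k$-subspaces cannot exhaust $k[x]_1$, so a valid $\ell_j$ exists. By Krull's principal ideal theorem together with the strict enlargement of each maximum-dimension component upon adjoining $\ell_j$, the dimension of $k[x]/P+(\ell_1,\dots,\ell_j)$ drops by exactly one whenever positive (once zero, it stays zero, as $P+(\ell_1,\dots,\ell_j) \subseteq (x_1,\dots,x_m) \neq k[x]$). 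After $c$ steps $\dim k[x]/P+(W_c) = \max\{d'-c,0\}$, as required. The main delicate step is this prime avoidance, and it is where the hypothesis that $k$ be infinite is used essentially.
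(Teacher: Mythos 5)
Your argument is correct in substance but takes a genuinely different route from the paper's. You reduce to the minimal primes of $I$ and, for each such prime $P$, combine semicontinuity of fiber dimension for the incidence family over $\Gr(c,k[x]_1)$ with prime avoidance over the infinite field $k$ to exhibit one $W$ achieving the minimal fiber dimension. The paper instead parametrizes \emph{sequences} of linear forms by $\prod_i k[x]_1$ and obtains openness through Hilbert functions: a uniform Castelnuovo--Mumford regularity bound (Caviglia--Sbarra) supplies a single degree $j^*$ at which Hilbert function and Hilbert polynomial agree for every member of the family, the Hilbert function of the ideal at degree $j^*$ is maximal on a dense open set cut out by non-vanishing of minors, and one then descends to $\Gr(c,k[x]_1)$ via Chevalley constructibility (and a flag variety when $c$ exceeds $\dim k[x]/I$). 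Both proofs rely on the same prime-avoidance construction of a good sequence, which is indeed where infiniteness of $k$ enters; your version is shorter and avoids both the regularity bound and the auxiliary parameter spaces. The one step you should justify more carefully is the assertion that $\{W : \dim \pi^{-1}(W) > \max\{d'-c,0\}\}$ is \emph{closed} in the Grassmannian: Chevalley's theorem gives upper semicontinuity of $z \mapsto \dim_z \pi^{-1}(\pi(z))$ on the \emph{source} $Z$, and semicontinuity on the target in general requires $\pi$ to be proper, which your affine family is not. It does hold here because $P+(W)$ is homogeneous, so every fiber is a cone: pulling the source-semicontinuous function back along the zero section $W \mapsto (W,0)$, which meets every irreducible component of every fiber, yields closedness on $\Gr(c,k[x]_1)$; alternatively, pass to $\operatorname{Proj}$ of the family, which is proper over the Grassmannian. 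With that one line added, the proof is complete.
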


Let $\mathscr{P}$ be the minimal set of homogeneous prime ideals of $k[x]$ such that $\sqrt{I_{\rho \tau_1,\tau_2}} = \bigcap_{P \in \mathscr{P}} P$. Let $\mathscr{P}_{\alpha,\beta}$ be the subset of $\mathscr{P}$ consisting of those $P$'s that do not contain $p_\alpha q_\beta$ for some $\alpha,\beta$. Each such $P$ corresponds to an irreducible component of $\Spec  \big(k[x] / I_{\rho \tau_1,\tau_2}\big)_{p_\alpha q_\beta}$. For $P \in \mathscr{P}_{\alpha,\beta}$ Lemma \ref{lem:sections-algebraic} with $c=m-\ell$ and $I=P$ gives a dense open set $\U_P^* \subseteq \Gr(m-\ell,k[x]_1)$ on which $\dim \big(k[x]/P+(W)\big) = \max\{ \dim \big(k[x]/P\big) - m+\ell, 0 \}$ for every $W \in \U_P^*$. Set $\U^*_{\alpha,\beta} = \bigcap_{P \in \mathscr{P}_{\alpha,\beta}} \U_P^*$. For every $W \in \U^*_{\alpha,\beta}$ we have $\dim  \big(k[x] / I_{\rho \tau_1,\tau_2} +(W) \big)_{p_\alpha q_\beta} = \max\big\{ \dim \big(k[x]/I_{\rho \tau_1,\tau_2}\big)_{p_\alpha q_\beta} - m+\ell, 0 \big\}$. By hypothesis $\dim U_{\rho \tau_1, \tau_2} \le m-n$ so $\dim \big(k[x]/I_{\rho \tau_1,\tau_2}\big)_{p_\alpha q_\beta} \le m-n$. With $\U^* = \bigcap_{\alpha,\beta} \U^*_{\alpha,\beta}$, for every $W \in \U^*$ we have that $\dim \big(k[x] / I_{\rho \tau_1,\tau_2} +(W) \big)_{p_\alpha q_\beta} \le \ell -n$ for every $\alpha,\beta$. Now under the isomorphism $\Gr(m-\ell,k[x]_1) \cong \Gr(\ell,m)$ the open set $\U^*$ gives an open set $\U_2 \subset \Gr(\ell,m)$ such that $H \in \U_2$ if and only if $I_H \in \U^*$. We conclude that $\dim U_{\tau_H} \setminus \overline{\ker(\tau_H)} \le \ell -n$ for every $H \in \U_2$.

The locus of $H$'s in $\U_1 \cap \U_2$ for which i) $\dim_k \ker(\tau_H)$ is minimal, ii) $\dim_k E_{\tau_H,1}=\ell - \rank (R'T_1B_H - R'T_2B_H)$ is minimal, iii) a unique basis $B_H$ exists with the top $\ell \times \ell$ block the identity matrix, is also open and non-empty; call it $\U_3$. For every $H \in \U_3$ the above mentioned unique representation $B_H$ of $H$ establishes a $k$-vector space isomorphism $H \cong k^\ell$ by sending the $j$th column of $B_H$ to the $j$th canonical vector of $k^\ell$. This further establishes an isomorphism of projective varieties $\gamma_H: \Gr(n,H) \xrightarrow{\sim} \Gr(n,\ell)$. By the definition of $\U_3$ $\dim_k \ker(\tau_H)$ is constant for every $H \in \U_3$, call that value $\alpha$. If $\alpha \le n$ then $\tau_H$ satisfies the hypothesis of Proposition \ref{prp:tau} for every $H \in \U_3$. Hence there is a dense open set $\U_H \subseteq \Gr(n,H)$ such that for every $V \in \U_H$ and $v_1,v_2 \in V$ we have $\tau_H(v_1) = v_2$ only if $v_1=v_2$. If on the other hand $\alpha > n$, it is easy to see that there is another dense open set that we also call $\U_H \subseteq \Gr(n,H)$, such that for every $V \in \U_H$ and $v_1,v_2 \in V$ the equality $\tau_H(v_1) = v_2$ implies $v_1=v_2=0$. 
 
We now show that the incidence correspondence $V \subset H$ with $H \in \U_4$ and $V \in \U_H$ contains a non-empty open set of the flag variety $\F(n,\ell,m)$, the latter defined as the closed subset of $\Gr(n,m) \times \Gr(\ell,m)$ cut out by the relation $V \in \Gr(n,H)$. Towards that end, it is enough to show that the equations that define $\U_H$ are polynomials in the Pl\"ucker coordinates of $V$ via $\gamma_H$ with rational coefficients in $B_H$. Denote by $k(B_H)$ the field of fractions of the polynomial ring $k[B_H]$ with the free entries of $B_H$ viewed as variables. The parametrization of $\U_H$ by $H$ depends on the two numbers $\alpha= \dim_k \ker(\tau_H)$ and $\beta = \dim_k E_{\tau_H,1}$. Both these dimensions are constant for every $H \in \U_3$ and there are three possibilities for the structure of $\U_H$ determined by the cases i) $\alpha \le n, \, \beta \le m-n$, ii) $\alpha \le n, \, \beta > m-n$, iii) $\alpha >n$. We only discuss i) and ii). For case i) the last part of the proof of Proposition \ref{prp:tau} shows that $\U_H$ is determined via $\gamma_H$ by the condition $\rank [T_H A \, \, \,  A] = 2n$, where $A \in k^{\ell \times n}$ is any basis of $\gamma_H(V)$. This amounts to the non-simultaneous vanishing of certain quadratic equations in the Pl\"ucker coordinates of $\gamma_H(V)$ with coefficients in $k(B_H)$. For case ii) we note that the number $\beta$ is equal to the $k(B_H)$-vector space dimension of the right nullspace of the matrix $T_H - I$, where $I$ is the identity matrix of size $\ell$. By Gauss-Jordan elimination over $k(B_H)$ we compute a $k(B_H)$-basis $\mathfrak{s}_1,\dots, \mathfrak{s}_{\beta} \in k(B_H)^\ell$ for that nullspace. We extend this sequence by adding vectors $s_1,\dots,s_{\ell -\beta} \in k^\ell$ such that the matrix $S  =[ \mathfrak{s}_1 \cdots \mathfrak{s}_\beta \, \, \,  s_1 \cdots s_{\ell - \beta} ] \in k(B_H)^{\ell \times \ell} $ is invertible over $k(B_H)$. The last part of the proof of Proposition \ref{prp:tau} shows that now $\U_H$ is determined via $\gamma_H$ as the $\gamma_H(V)$'s with basis $A \in k^{\ell \times n}$ for which $\det(S_{[n]}^{-1} A) \neq 0$, where $S_{[n]}^{-1}$ is the top $n \times m$ block of $S^{-1}$. This is a linear equation in the Pl\"ucker coordinates of $\gamma_H(V)$ with rational coefficients in $B_H$. 

We have a non-empty open set $\mathscr{O} \subset \F(n,\ell,m)$ such that for every $(V,H) \in \mathscr{O}$ we have that $V$ satisfies the property of interest: if $\tau_1(v_1) = \tau_2(v_2)$ for $v_1,v_2 \in V$ then $\tau_H(v_1) = v_2$ and thus necessarily $v_1=v_2$. The equations that define $\mathscr{O}$ also define a non-empty open subscheme $\overline{\mathscr{O}}$ of the flag scheme $\overline{\F}(n,\ell,m)$, where the overline notation indicates scheme structure. Now, since both $\overline{\F}(n,\ell,m)$ and $\overline{\Gr}(n,m)$ are irreducible, the image of $\overline{\mathscr{O}}$ under the canonical projection  $\overline{\F}(n,\ell,m) \rightarrow \overline{\Gr}(n,m)$ is dense. By Chevalley's theorem that image is constructible and thus it contains a non-empty open set $\overline{\U}_5$ whose $k$-valued points satisfy our property of interest. It remains to show how to get the open set $\U \subset \Gr(n,m)$ of the theorem. $\Gr(n,m), \overline{\Gr}(n,m)$ are locally isomorphic to the affine space of dimension $n(m-n)$. Let $\overline{\U}_6$ be the open set of $\overline{\Gr}(n,m)$ where some Pl\"ucker coordinate does not vanish. With $Y$ an $n \times (m-n)$ matrix of indeterminates, $\overline{\U}_6$ is isomorphic to $\mathbb{A}^{n(m-n)}=\Spec k[Y]$. The non-vanishing of the same Pl\"ucker coordinate in $\Gr(n,m)$ gives an open set $\U_7 \subset \Gr(n,m)$, which is isomorphic to $k^{n(m-n)}$. Replacing $\overline{\U}_5$ by its intersection with $\overline{\U}_6$, we may assume that it lies in $\mathbb{A}^{n(m-n)}$. As $\overline{\U}_5$ is covered by basic affine open sets, we may further assume that $\overline{\U}_5 = \Spec (k[Y])_p$ for some non-zero polynomial $p \in k[Y]$. Our open set $\U$ is the non-vanishing locus of $p$ in $\U_7$, which is non-empty by the infinity of $k$.

\subsection{Proof of Proposition \ref{prp:tau}} \label{subsection:Proof-prp:tau}

We recall some notions from linear algebra following \cite{Roman}. For simplicity we write $\tau v$ instead of $\tau(v)$. We say that a $k$-subspace $C$ of $k^m$ is $\tau$-cyclic if it admits a basis of the form $v,\tau v, \tau^2 v, \dots, \tau^{d-1}v$ for some $v \in k^m$ with $d=\dim_k C$. Let $y$ be a transcendental element over $k$. Then $k^m$ admits a $k[y]$-module structure under the action $p(y) \in k[y] \mapsto p(\tau) \in \operatorname{Hom}_k(k^m,k^m)$. Let $m_{\tau}(y)$ be the monic minimal polynomial of $\tau$ and let $m_{\tau}(y) = p_1^{\ell_1}(y) \cdots p_s^{\ell_s}(y)$ be its unique factorization into powers of irreducible polynomials $p_i(y) \in k[y]$. Then $k^m$ admits a primary cyclic decomposition as a $k[y]$-module into the direct sum of $\tau$-cyclic subspaces on which the minimal polynomial of $\tau$ is a power of one of the $p_i(y)$'s. Now $\tau$ admits an eigenvalue $\lambda \in k$ if and only if $y-\lambda$ divides $m_\tau(y)$, that is if and only if one of the $p_i(y)$'s is equal to $y-\lambda$. Let $C$ be a $\tau$-cyclic subspace as above in the primary decomposition with minimal polynomial of the form $(y-\lambda)^{e}$. Then $w_i=(\tau - \lambda)^{d-i}v, \, i \in [d]$ is a basis of $C$ with $\tau w_1 = \lambda w_1$ and $\tau w_i = \lambda w_i + w_{i-1}, \, i=2,\dots,d$. We call this basis a Jordan basis and the matrix representation of $\tau|_C$ on that basis is a Jordan block  
$$  \begin{bmatrix} 
\lambda & 1 & 0 & \cdots & 0 & 0 \\
0 & \lambda & 0 & \cdots & 0 & 0 \\
\vdots & \vdots & \vdots & \ddots & \vdots & \vdots \\
0 & 0 & 0 & \cdots & \lambda & 1 \\ 
0 & 0 & 0 & \cdots & 0 & \lambda 
\end{bmatrix} \in k ^{d \times d}$$
Thus the geometric multiplicity of the eigenvalue $\lambda \in k$ is the number of $\tau$-cyclic subspaces in the primary decomposition of $k^m$ for which $m_{\tau|_C}(y) = (y-\lambda)^d$ for some $d \ge 1$. Now $\tau$ induces an endomorphism of $\bar{k}^m$ in a natural way, which we also call $\tau$. With $\lambda_i, i \in [s]$ the eigenvalues of $\tau$ over $\bar{k}$ we have that $\bar{k}^m$ admits a decomposition $\bar{k}^m = \bigoplus_{t,i} C_{t,\lambda_i}$ into $\tau$-cyclic $\bar{k}$-subspaces with $C_{t, \lambda_i}$ corresponding to eigenvalue $\lambda_i$. That is each $\C_{t,\lambda_i}$ admits a Jordan basis $w_1,\dots,w_{d_{ti}}$ such that $\tau w_1 = \lambda_i w_1$ and $\tau w_j = \tau_i w_j + w_{j-1}, \, \, \, \forall j =2,\dots,d_{ti}$. We denote by $E_{\tau,\lambda}$ the eigenspace of $\tau$ associated to eigenvalue $\lambda$. We note that if $\lambda \in k$ then $\dim_k E_{\tau,\lambda} \cap k^m = \dim_{\bar{k}} E_{\tau,\lambda}$. Finally, with $K=k,\bar{k}$ we denote by $\Gr_K(n,m)$ the set of all $n$-dimensional $K$-subspaces of $K^m$.

We prove the proposition in several stages, starting with the boundary situation described in the next lemma.

\begin{lem} \label{lem:mu=n}
Suppose $m=2n$ and $\dim_{\bar{k}} E_{\tau,\lambda} =n$ for some $\lambda \in \bar{k}$. Then there exists a $V \in \Gr_{\bar{k}}(n,m)$ such that $\bar{k}^{m} = V \oplus \tau(V)$.   
\end{lem}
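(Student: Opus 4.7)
The plan is to reformulate the condition $\bar{k}^m = V \oplus \tau(V)$ into the requirement that a single $n$-dimensional $V$ avoid two specific $n$-dimensional subspaces of $\bar{k}^m$, and then finish with a standard Grassmannian genericity argument.

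First I would set $\sigma = \tau - \lambda \cdot \id$. Since $V$ is a $\bar{k}$-subspace we have $\lambda V \subseteq V$, so $V + \tau(V) = V + \sigma(V)$; hence it suffices to produce $V \in \Gr_{\bar{k}}(n,m)$ with $V + \sigma(V) = \bar{k}^m$. Moreover, from $\dim_{\bar{k}} V = n$ and $m = 2n$, the equality $V + \tau(V) = \bar{k}^m$ automatically forces $\dim_{\bar{k}} \tau(V) = n$ and $V \cap \tau(V) = 0$, upgrading the sum to a direct sum without further work.

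Next let $E = \ker\sigma = E_{\tau,\lambda}$ and $F = \im\sigma$. By hypothesis $\dim_{\bar{k}} E = n$, so rank-nullity gives $\dim_{\bar{k}} F = m - n = n$. The key observation is that if $V \cap E = 0$, then $\sigma|_V \colon V \to F$ is injective between spaces of equal dimension $n$, hence an isomorphism; in particular $\sigma(V) = F$. Then $V + \sigma(V) = V + F$, which equals $\bar{k}^m$ iff $V \cap F = 0$, again by dimension. Thus the whole problem reduces to finding a single $V \in \Gr_{\bar{k}}(n,m)$ satisfying both $V \cap E = 0$ and $V \cap F = 0$.

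For the existence step, the loci $\Sigma(E) = \{V : V \cap E \ne 0\}$ and $\Sigma(F) = \{V : V \cap F \ne 0\}$ are Zariski closed subvarieties of the irreducible Grassmannian $\Gr_{\bar{k}}(n,m)$, each cut out by the vanishing of appropriate maximal minors and each proper since any linear complement of $E$ (respectively $F$), which manifestly exists, lies outside it. Their union is therefore a proper closed subset of an irreducible variety, and since $\bar{k}$ is infinite its open complement contains $\bar{k}$-points; any such point gives the desired $V$. I do not foresee a serious obstacle: the core identity $\dim_{\bar{k}} E = \dim_{\bar{k}} F = n$ follows immediately from $m = 2n$ together with the hypothesis $\dim_{\bar{k}} E_{\tau,\lambda} = n$, and the rest is a routine genericity argument.
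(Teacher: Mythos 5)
Your proof is correct, and it takes a genuinely different and much more elementary route than the paper's. The paper proves this lemma by passing to the primary cyclic (Jordan) decomposition of $\bar{k}^{2n}$, noting that the hypothesis forces exactly $n$ Jordan blocks for $\lambda$, and then explicitly constructing a basis $u_1,\dots,u_n$ of $V$ by pairing the $n$ eigenvectors with the leftover Jordan basis vectors in a carefully ordered fashion; verifying that $\Span\big(\{u_t,\tau u_t\}\big)$ exhausts $\bar{k}^{2n}$ then requires a chain of combinatorial dimension counts on the block sizes. Your argument instead isolates exactly what is special about this boundary case: with $\sigma=\tau-\lambda\id$, the hypothesis together with $m=2n$ says that $\ker\sigma$ and $\im\sigma$ both have dimension $n$, so for any $V$ with $V\cap\ker\sigma=0$ one gets $\sigma(V)=\im\sigma$ and hence $V+\tau(V)=V+\sigma(V)=V+\im\sigma$, reducing everything to choosing a single $n$-plane transverse to the two fixed $n$-planes $\ker\sigma$ and $\im\sigma$ --- a nonempty open condition on the irreducible Grassmannian, and the direct-sum upgrade is automatic by dimension count. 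All the individual steps check out (the identity $V+\tau(V)=V+\sigma(V)$, the surjectivity of $\sigma|_V$ onto $\im\sigma$, the properness of each bad locus, and the existence of a $\bar{k}$-point in the complement over the algebraically closed field $\bar{k}$). What your route buys is brevity and the elimination of all case analysis; what it gives up is an explicit $V$, and --- more relevantly for the paper's architecture --- it does not propagate to the following lemma (the case $\dim_{\bar{k}}E_{\tau,\lambda}\le n$ for all $\lambda$), where $\im(\tau-\lambda\id)$ has dimension greater than $n$ and $\sigma(V)$ is no longer pinned down, so the Jordan-block machinery the paper sets up here gets reused there. As a self-contained proof of this particular lemma, yours is complete and cleaner.
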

\begin{proof}
Let $\lambda_i, i \in [s]$ be the spectrum of $\tau$ over $\bar{k}$ and suppose that $\lambda_1=\lambda$ is the said eigenvalue. Then in the decomposition above of $\bar{k}^{2n}$ there are exactly $n$ subspaces $C_{t,\lambda_1}, t \in [n]$ associated to $\lambda_1$ each of them contributing a single eigenvector. With $w_{t,1},\dots,w_{t,d_t}$ a Jordan basis for $C_{t,\lambda_1}$ that eigenvector is $w_{t,1}$ and we set $v_t = w_{t,1}$ for $t \in [n]$. We produce $n$ linearly independent vectors $u_t, t \in [n]$ to be taken as a basis for the claimed subspace $V$, by summing pairwise the $v_t$'s with the remaining Jordan basis vectors across all $C_{t,\lambda_i}$'s in a manner prescribed below.

First, suppose that all $\C_{t,\lambda_1}$'s are $1$-dimensional. Then $\C_{1,\lambda_2}$ is a non-trivial subspace with Jordan basis say $w_1,\dots,w_d$, for some $d \ge 1$. We construct the first $d$ basis vectors $u_1,\dots,u_d$ for $V$ as $u_j = v_j + w_j, \, \, \, j \in [d]$.
A forward induction on the relations
\begin{align}
\tau u_1  = \lambda_1 v_1 + \lambda_2 w_1; \, \, \, \tau u_j  = \lambda_1 v_j + \lambda_2 w_j + w_{j-1}, \, \, \,  j=2,\dots,d, \nonumber 
\end{align} together with $\lambda_1 \neq \lambda_2$, gives 
$$\Span(u_1,\tau u_1,\dots,u_d,\tau u_d) = \left( \oplus_{t \in [d]} \C_{t,\lambda_1} \right)  \oplus \C_{1,\lambda_2}$$ If $d=n$ we are done, otherwise either $\C_{2,\lambda_2}$ or $\C_{1,\lambda_3}$ is a non-trivial subspace and we inductively repeat the argument above until all $\C_{t,\lambda_i}$'s are exhausted. 
     
Next, suppose that not all $C_{t,\lambda_1}$'s are $1$-dimensional. We may assume that there exists integer $0 \le r < n$ such that $\dim C_{t,\lambda_1} =1$ for every $t \le r$ and $\dim C_{t,\lambda_1} =d_j>1$ for every $t >r$. If $r=0$, then each $C_{t,\lambda_1}$ is necessarily $2$-dimensional and $\tau$ has only one eigenvalue $\lambda_1$. Letting $w_{1,t},w_{2,t}$ be the Jordan basis for $C_{t,\lambda_1}$, we define $u_t = w_{2,t}, \, \, \, \forall t \in [n]$. Clearly, 
$\Span(u_t, \tau u_t) = \Span(w_{1,t},w_{2,t})$, in which case $\Span\left(\{u_t,\tau u_t\}_{t \in [n]} \right) = \oplus_{t \in [n]} C_{t,\lambda_1} = \bar{k}^{2n}$. So suppose $1 \le r < n$. Let $w_1,\dots,w_{d_{r+1}}$ be a Jordan basis for $C_{r+1,\lambda_1}$. Since 
\begin{align}
2(n-r-1) \le \dim \oplus_{t=r+2}^{n} C_{t,\lambda_1} \le  \codim \oplus_{t=1}^{r+1} C_{t,\lambda_1} =2n-r-d_{r+1}, \nonumber
\end{align} we must have $$d_{r+1}-2 \le r$$ Recall that $w_1 = v_{r+1}$ and define $u_1 = v_{r+1}+w_{d_{r+1}}$ and $u_j=v_{j-1}+w_j$ for $j=2,\dots,d_{r+1}-1$. Noting that $\{w_j: j\in[d_{r+1}-1]\}=\{\tau u_j - \lambda u_j: j\in[d_{r+1}-1]\}$, we have $$\Span\left(\{u_j,\tau u_j\}_{j=1}^{d_{r+1}-1} \right) = \left(\oplus_{t=1}^{d_{r+1}-2} C_{t,\lambda_1}\right) \oplus  C_{r+1,\lambda_1}$$ If $r=n-1$, we have found a $(d_n-1)$-dimensional subspace $V':=\Span(u_j: \, j \in [d_{r+1}-1])$ such that $V'+\tau(V') = \oplus_{t=1}^n C_{t,\lambda_1}$. Otherwise if $r< n-1$, $C_{r+2,\lambda_1}$ is a nontrivial subspace of dimension $d_{r+2} \ge 2$, which must satisfy $r + d_{r+1} +d_{r+2} +2(n-r-2) \le 2n$ or 
$$d_{r+2}-2 \le r - (d_{r+1}-2)$$ Letting $w_1,\dots,w_{d_{r+2}}$ be a Jordan basis for $C_{r+2,\lambda_1}$ and recalling the convention $v_{r+2}=w_1$, we define $u_{d_{r+1}},\dots,u_{d_{r+1}+d_{r+2}-2}$ as 
\small
\begin{align}
u_{d_{r+1}} = v_{r+2}+w_{d_{r+2}},\, \, \, u_{d_{r+1}-1+j} = v_{d_{r+1}-3+j} + w_j, \, \, \, \forall  j=2,\dots, d_{r+2}-1 \nonumber
\end{align} \normalsize Then one verifies that
\small
\begin{align}
\Span\left(\{u_{d_{r+1}-1+j},\tau u_{d_{r+1}-1+j}\}_{j=1}^{d_{r+2}-1} \right) = \Big(\oplus_{t=1}^{d_{r+2}-2} C_{d_{r+1}-2+t,\lambda_1}\Big) \oplus C_{r+2,\lambda_1} \nonumber
\end{align} \normalsize and in particular 
\small
\begin{align}
 \Span\left(\{u_{j},\tau u_{j}\}_{j=1}^{d_{r+1}+d_{r+2}-2} \right) =  \Big(\oplus_{t=1}^{d_{r+1}+d_{r+2}-4} C_{t,\lambda_1}\Big) \oplus \Big(\oplus_{t \in [2]} C_{r+t,\lambda_1} \Big) \nonumber 
\end{align} \normalsize Continuing inductively like this we exhaust all $C_{t,\lambda_1}$'s that have dimension greater than $1$ and obtain $$V'=\Span\Big(\Big\{u_j: \, j=1,\dots,\sum_{j \in [n-r]} (d_{r+j}-1)\Big\} \Big)$$ 
$$V'+\tau(V')=\Big(\oplus_{t \in \left[\sum_{j=1}^{n-r}(d_{r+j}-2)\right]} C_{t,\lambda_1}\Big) \oplus \Big(\oplus_{t \in [n-r]}C_{r+t,\lambda_1} \Big)$$
 with $\sum_{j=1}^{n-r}(d_{r+j}-2) \le r$. If equality is achieved then $\dim V' = n$ and we can take $V=V'$; note that in that case $s=1$. Otherwise, $\dim \oplus_{t; i>1} C_{t,\lambda_i} = r - \sum_{j=1}^{n-r}(d_{r+j}-2)=:\alpha$ and this is precisely the number of $1$-dimensional $C_{t,\lambda_1}$'s that have not been used so far. Letting $\xi_1,\dots,\xi_{\alpha}$ be the union of all Jordan bases of all $C_{t,\lambda_i}$'s for $i>1$, we define the remaining $\alpha$ basis vectors of $V$ as 
$u_{n-\alpha+j} = v_{r-\alpha+j}+\xi_j, \, j \in [\alpha]$, and since 
\begin{align}
\Span\left(\{u_{n-\alpha+j},\tau (u_{n-\alpha+j})\}_{j=1}^{\alpha} \right) =  \Big(\oplus_{j=1}^{\alpha} C_{r-\alpha+j,\lambda_1}\Big) \oplus \Big(\oplus_{t; i>1} C_{t,\lambda_i}\Big) \nonumber
\end{align} the proof is complete.
\end{proof}

We now use Lemma \ref{lem:mu=n} to get a stronger statement for eigenspace dimensions less than or equal to half of the ambient dimension.
\begin{lem} \label{lem:mu<=n}
Suppose $m=2n$ and $\dim_{\bar{k}} E_{\tau,\lambda} \le n$ for every $\lambda \in \bar{k}$. Then there exists a $V \in \Gr_{\bar{k}}(n,m)$ such that $\bar{k}^{m} = V \oplus \tau(V)$.    
\end{lem}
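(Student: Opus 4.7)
The plan is to argue by induction on $n$, invoking Lemma \ref{lem:mu=n} as a subroutine whenever possible. For the base case $n=1$ (so $m=2$), either $\tau$ has two distinct eigenvalues $\lambda_1 \neq \lambda_2$ with eigenvectors $v_1, v_2$ and $V=\Span(v_1+v_2)$ works because $v_1+v_2$ and $\lambda_1 v_1+\lambda_2 v_2$ are linearly independent, or $\tau$ has a single $2\times 2$ Jordan block with basis $w_1, w_2$ and $V=\Span(w_2)$ works because $\tau w_2 = \lambda w_2 + w_1 \notin V$; the remaining possibility (diagonalizable with one eigenvalue of multiplicity two) is excluded by the hypothesis $\dim_{\bar k} E_{\tau,\lambda} \le 1$.

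For the inductive step, if $\dim_{\bar k} E_{\tau,\lambda} = n$ for some $\lambda$, Lemma \ref{lem:mu=n} delivers the conclusion directly. Otherwise every eigenspace has dimension at most $n-1$. The plan is to extract a $\tau$-invariant direct sum $\bar k^{2n} = W \oplus W'$ with $\dim W = 2$ and $\dim W' = 2(n-1)$, using the primary decomposition $\bar k^{2n}=\bigoplus_{t,i}C_{t,\lambda_i}$. Then $W$ is chosen as either a $2\times 2$ Jordan block or the sum of two $1$-dimensional Jordan blocks with distinct eigenvalues, and the base case supplies $V_W \subset W$ with $W = V_W \oplus \tau V_W$. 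The eigenspace hypothesis transfers to $W'$ since $\dim E_{\tau|_{W'}, \lambda} \le \dim E_{\tau, \lambda} \le n - 1$, matching the hypothesis for $n' = n - 1$; by the induction hypothesis there exists $V_{W'} \subset W'$ with $W' = V_{W'} \oplus \tau V_{W'}$, and $V = V_W \oplus V_{W'}$ completes the construction.

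The main obstacle is showing that the desired summand $W$ always exists. Two degenerate configurations escape the scheme above: (i) $\tau$ consists of a single Jordan block of size $2n$, which admits no nontrivial $\tau$-invariant direct sum decomposition; and (ii) $\tau$ has multiple Jordan blocks but no $2 \times 2$ block, and every $1$-dimensional block shares the same eigenvalue. Configuration (i) is handled by direct construction: $V=\Span(w_2, w_4, \ldots, w_{2n})$ immediately yields $\bar k^{2n} = V \oplus \tau V$. Configuration (ii) is the subtler one; here the hypothesis $\dim_{\bar k} E_{\tau, \lambda} \le n-1$ forces the presence of an odd-length chain of length $\ge 3$ for some eigenvalue $\mu$ distinct from the eigenvalue $\lambda$ carrying the $1$-dimensional blocks, and I would pair a $\lambda$-eigenvector with the top of the $\mu$-chain---a cross-eigenvalue argument modeled on the construction in the proof of Lemma \ref{lem:mu=n}---to carve off a small $\tau$-invariant chunk on which an explicit construction yields the required piece, then finish by induction on the remainder.
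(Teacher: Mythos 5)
Your high-level strategy coincides with the paper's: induct on $n$, fall back on Lemma \ref{lem:mu=n} when some eigenspace has dimension exactly $n$, and otherwise split off a $2$-dimensional piece handled by the base case; your base case and the observation that the eigenspace bound passes to the $\tau$-invariant remainder are both correct. The gap is in configuration (ii). First, the structural claim you lean on is false: for $n=3$ take one Jordan block of size $1$ and one of size $5$, both with eigenvalue $\lambda$; then $\dim_{\bar{k}}E_{\tau,\lambda}=2=n-1$, there is no $2\times 2$ block, and the only chain of length $\ge 3$ carries the \emph{same} eigenvalue as the $1$-dimensional block (and for $n=4$ with blocks of sizes $1,3,4$, all of eigenvalue $\lambda$, the long chains need not have odd length either). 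Second, configuration (ii) as you delimit it also contains the case of \emph{no} $1$-dimensional blocks and no $2\times2$ blocks, e.g.\ two $3\times3$ blocks for $n=3$: there is no eigenvector to pair with anything, and more fundamentally no $2$-dimensional $\tau$-invariant direct summand exists at all (by uniqueness of the decomposition into indecomposable $\bar{k}[y]$-summands, every $\tau$-invariant direct summand has dimension $0$, $3$, or $6$), so the scheme ``$\bar{k}^{2n}=W\oplus W'$ with both summands $\tau$-invariant and $\dim W=2$'' cannot succeed. The ``I would pair\dots'' sketch does not close these cases.

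The fix, which is exactly where the paper departs from your setup, is to drop the requirement that the $2$-dimensional complement be $\tau$-invariant; only the $2(n-1)$-dimensional remainder $S$ needs to be. If at most one eigenvalue contributes $1$-dimensional blocks (the only situation your main scheme misses), some block $C_{t',\lambda_{i'}}$ has a Jordan basis $w_1,\dots,w_d$ with $d\ge 2$. Take $S=\big(\oplus_{(t,i)\neq(t',i')}C_{t,\lambda_i}\big)\oplus\Span(w_1,\dots,w_{d-2})$, which is $\tau$-invariant of dimension $2(n-1)$ and still satisfies the eigenspace bound, and take $u=w_d$, so that $\Span(u,\tau u)=\Span(w_{d-1},w_d)$ complements $S$ and $V=V'+\Span(u)$ works. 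This single move subsumes your configuration (i), all of configuration (ii), and the examples above.
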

\begin{proof}

Let $\lambda_i, i \in [s]$ be the eigenvalues of $\tau$ over $\bar{k}$ and proceed by induction on $n$. For $n=1$ we have $s\le 2$ and $\dim E_{\tau,\lambda_i} = 1$, whence the claim follows from Lemma \ref{lem:mu=n}. So let $n>1$. If $\dim_{\bar{k}} E_{\tau,\lambda_i} = n$ for some $i$, then we are done by Lemma \ref{lem:mu=n}. Hence suppose throughout that $\dim_{\bar{k}} E_{\tau,\lambda_i} < n, \, \forall i \in [s]$. Since the induction hypothesis applied on any $2(n-1)$-dimensional $\tau$-invariant subspace $S$ furnishes an $(n-1)$-dimensional subspace $V' \subset S$ such that $V' \oplus \tau(V') = S$, our strategy is to suitably select $S$ so that for a $2$-dimensional complement $T$ there is a vector $u \in T$ such that $\Span(u,\tau u) = T$. Then we can take $V = V' + \Span(u)$.

If there are two $1$-dimensional subspaces $C_{1,\lambda_1},\, C_{1,\lambda_2}$ spanned by $v_1,v_2$ respectively, we let
$S = \oplus_{(t,i) \neq (1,1), (1,2)} C_{t,\lambda_i}$ and $u = v_1+v_2$. So suppose that there is at most one eigenvalue, say $\lambda_1$, that possibly contributes $1$-dimensional subspaces $C_{t,\lambda_1}$'s. In that case, there exist $t',i'$ such that $d:=\dim_{\bar{k}} C_{t',\lambda_{i'}} >1$. Let $w_1,\dots,w_d$ be a Jordan basis for $ \C_{t',\lambda_{i'}}$. Define the $\tau$-invariant subspace $\tilde{C}_{t,\lambda_i} = \Span(w_1,\dots,w_{d-2})$, taken to be zero if $d=2$. Then we let $S = \left(\oplus_{(t,i) \neq (t',i')} C_{t,\lambda_i} \right) \oplus \tilde{C}_{t,\lambda_i}$ and 
 $u = w_d$.
\end{proof}

We take one step further by allowing $m \ge 2n$.
\begin{lem} \label{lem:m=2n+c}
Suppose $m \ge 2n$ and $\dim_{\bar{k}} E_{\tau,\lambda} \le m-n$ for every $\lambda \in \bar{k}$. Then there exists a $V \in \Gr_{\bar{k}}(n,m)$ such that $\dim_{\bar{k}} V \oplus \tau(V) =2n$.    
\end{lem}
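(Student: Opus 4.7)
The plan is to reduce to Lemma \ref{lem:mu<=n} by producing a $2n$-dimensional $\tau$-invariant subspace $S \subseteq \bar{k}^m$ on which every eigenspace of $\tau|_S$ has dimension at most $n$. Once such an $S$ is in hand, Lemma \ref{lem:mu<=n} applied to $\tau|_S$ gives an $n$-dimensional $V \subseteq S$ with $V \oplus \tau|_S(V) = S$, and since $S$ is $\tau$-invariant we have $\tau|_S(V) = \tau(V)$, yielding $\dim_{\bar{k}} V \oplus \tau(V) = 2n$.

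To build $S$ I would use the primary cyclic decomposition $\bar{k}^m = \bigoplus_{t,i} C_{t,\lambda_i}$ and the fact that, within each $C_{t,\lambda_i}$ with Jordan basis $w_1,\dots,w_{d_{t,i}}$, the prefix $\Span(w_1,\dots,w_{j})$ is $\tau$-invariant for every $0 \le j \le d_{t,i}$ and contributes one eigenvector to $E_{\tau|_S,\lambda_i}$ when $j \ge 1$. Thus constructing $S$ amounts to a combinatorial selection: choose $j_{t,i} \in \{0,\dots,d_{t,i}\}$ summing to $2n$ with at most $n$ nonzero $j_{t,i}$ for each fixed $i$.

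Set $\mu_i = \dim_{\bar{k}} E_{\tau,\lambda_i}$ and $m_i = \sum_t d_{t,i}$. The maximum total dimension one can extract from $\lambda_i$ subject to the $n$-block cap is $N_i = m_i$ if $\mu_i \le n$, and otherwise $N_i$ equals the sum of the $n$ largest $d_{t,i}$. It suffices to show $\sum_i N_i \ge 2n$, since one can then shrink some of the $j_{t,i}$ to make the total land exactly at $2n$. Letting $A = \{i : \mu_i > n\}$: if $|A|=0$ then $\sum_i N_i = m \ge 2n$ by hypothesis; if $|A|\ge 2$ then each such $N_i \ge n$ (summing $n$ positive block sizes), so $\sum_i N_i \ge 2n$.

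The substantive case is $|A|=1$, say $A=\{1\}$, where the inequality reduces to $N_1 \ge m_1 - m + 2n$. Ordering the blocks of $\lambda_1$ as $d_1 \ge \cdots \ge d_{\mu_1}$, I would proceed by a dichotomy on $d_n$: if $d_n \ge 2$ then the $n$ largest already sum to $\ge 2n \ge m_1 - m + 2n$; if $d_n = 1$ then $d_{n+1}=\cdots=d_{\mu_1}=1$, so the $\mu_1 - n$ smallest blocks sum to $\mu_1 - n \le m - 2n$ by the hypothesis $\mu_1 \le m - n$, giving $N_1 = m_1 - (\mu_1 - n) \ge m_1 - (m - 2n)$. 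This last case is the only real obstacle, and it is precisely where the bound $\dim_{\bar{k}} E_{\tau,\lambda} \le m-n$ is essential.
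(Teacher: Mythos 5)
Your proof is correct and follows essentially the same route as the paper: both reduce to Lemma \ref{lem:mu<=n} by producing a $2n$-dimensional $\tau$-invariant subspace $S$ obtained by truncating the cyclic subspaces $C_{t,\lambda_i}$ to Jordan-basis prefixes so that each eigenvalue contributes at most $n$ blocks. The only difference is in how the existence of a valid truncation is certified: the paper runs an explicit case-by-case construction, whereas you bound the maximum extractable dimension $\sum_i N_i$ below by $2n$ and then shrink, which is a cleaner way to organize the same combinatorics (your case $\lvert A\rvert=1$ with $d_n=1$ is exactly where the paper also invokes the hypothesis $\dim_{\bar{k}} E_{\tau,\lambda}\le m-n$).
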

\begin{proof}
The strategy is to find a $2n$-dimensional $\tau$-invariant subspace $S \subset \bar{k}^m$ for which $\dim_{\bar{k}} E_{\tau|_{S},\lambda_i} \le n$; then the claim will follow from Lemma \ref{lem:mu<=n}. We obtain $S$ by suitably truncating the $C_{t,\lambda_i}$'s. Set $\mu = \max_{i \in [s]} \dim_{\bar{k}} E_{\tau,\lambda_i}$. If $\mu=1$ then $\tau$ has $m$ distinct eigenvalues and we may take $S = \oplus_{i \in [2n]} C_{i,\lambda_i}$. Suppose that $1< \mu \le n$. Set $c=m-2n$. If there is some $C_{t',\lambda_{i'}}$ with $d = \dim_{\bar{k}} C_{t',\lambda_{i'}} \ge c$, let $w_1,\dots,w_d$ be a Jordan basis for $C_{t',\lambda_{i'}}$ and take $S =  \Big(\oplus_{(t,i) \neq (t',i')} C_{t,\lambda_i}\Big) \oplus \Span(w_1,\dots,w_{d-c})$. Otherwise, let $\beta>1$ be the smallest number of subspaces $C_{t_1,\lambda_{i_1}},\dots,C_{t_\beta,\lambda_{i_\beta}}$ for which $\dim_{\bar{k}} \oplus_{j \in [\beta]} C_{t_j,\lambda_{i_j}} = c+\ell$ for some $\ell \ge 0$. Then by the minimality of $\beta$ we must have that $\dim_{\bar{k}} C_{t_1,\lambda_{i_1}} \ge \ell$. Now replace $C_{t_1,\lambda_{i_1}}$ by an $\ell$-dimensional $\tau$-invariant subspace $\tilde{C}_{t_1,\lambda_{i_1}}$ obtained as the span of the first $\ell$ vectors of a Jordan basis of $C_{t_1,\lambda_{i_1}}$ and take $S = \Big(\oplus_{(t,i)\neq (t_j,\lambda_{i_j}), \, j \in [\beta]} C_{t,\lambda_i}\Big) \oplus \tilde{C}_{t_1,\lambda_{i_1}}$.

Next, suppose that $\mu > n$ and we may assume that $\dim_{\bar{k}} E_{\tau,\lambda_1}=\mu=n+c_1$ with $0<c_1 \le c$. We first treat the case $c_1=c$. In such a case $\dim_{\bar{k}} E_{\tau,\lambda_i} \le n$ for any $i >1$. Let $r$ be the number of $1$-dimensional $C_{t,\lambda_1}$'s, say $C_{1,\lambda_1},\dots,C_{r,\lambda_1}$. Then we must have that $$r+2(n+c-r) \le 2n+c \Leftrightarrow c \le r$$ and we can take $S = \Big(\oplus_{t=c+1}^{n+c} C_{t,\lambda_1}\Big) \oplus \Big( \oplus_{t; i>1} C_{t,\lambda_i}\Big)$. Next, suppose that $c_1 < c$. If $\dim_{\bar{k}} C_{t,\lambda_i}=1$ for every $t,i$, then there are $n+c-c_1$ $1$-dimensional $C_{t,\lambda_i}$'s associated to eigenvalues other than $\lambda_1$. In that case we can take $S$ to be the sum of $n$ subspaces associated to $\lambda_1$ and any other subspaces associated to eigenvalues different than $\lambda_1$. If on the other hand $\dim_{\bar{k}} C_{t',\lambda_{i'}}>1$ for some $t',i'$, then we replace $\bar{k}^m$ by $U_1$, the latter being the sum of all $C_{t,\lambda_i}$'s with the exception that $C_{t',\lambda_{i'}}$ has been replaced by a $\tilde{C}_{t',\lambda_{i'}} \subset C_{t',\lambda_{i'}}$ of dimension one less which we rename to $C_{t',\lambda_{i'}}$. Notice that this replacement does not change $\mu$. If $c-1=c_1$ or all $C_{t,\lambda_i}$'s in the decomposition of $U_1$ are $1$-dimensional, we are done by proceeding as above. If on the other hand $c-1> c_1$ and there is a $C_{t',\lambda_{i'}}$ of dimension larger than one, then replace $U_1$ by $U_2$, where the latter is the sum of all $C_{t,\lambda_{i}}$'s except the said $C_{t',\lambda_{i'}}$, which is replaced as above by a $C_{t',\lambda_{i'}}$ of dimension one less. Continuing inductively like this furnishes $S$. 
\end{proof}

We are now in a position to complete the proof of Proposition \ref{prp:tau}. Suppose first that $\dim_{\bar{k}} E_{\tau,1} \le m-n$. Then for $V \in \Gr_k(n,m)$ we have $\dim_k(V+\tau(V)) \le 2n$ with equality on an open set $\U \subset \Gr_k(n,m)$. With $A \in k^{m \times n}$ a basis of $V$, we have that $V \in \U$ if and only if some $2n \times 2n$ minor of the $m \times 2n$ matrix $[A \, \, \, T A]$ is non-zero. These minors are polynomials in the Pl\"ucker coordinates of $V$ with coefficients over $k$. By Lemma \ref{lem:m=2n+c} at least one of these polynomials has a non-zero valuation at some $V^* \in \Gr_{\bar{k}}(n,m)$ so that $\U$ is non-empty. Note that every $V \in \U$ necessarily does not intersect $\ker(\tau)$. Then for every $V \in \U$ we have $V \cap \tau(V) = 0$ so that the equality $\tau(v_1) = v_2$ implies $v_1=v_2=0$.  

Next, suppose that $\dim_{\bar{k}} E_{\tau,1} > m-n$. This implies $\dim_k E_{\tau,1} \ge n$. Let $S \in k^{m \times m}$ be an invertible matrix such that its first $n$ columns lie in $E_{\tau,1}$. Then the matrix $S^{-1} T S$ is block diagonal with the top left $n \times n$ block the identity matrix. Denote by $S_{[n]}^{-1}$ the top $n \times m$ block of $S^{-1}$ and let $\U$ be the non-empty open set of $\Gr_k(n,m)$ such that for every $V \in \U$ and every $A \in k^{m \times n}$ basis of $V$, the matrix $S_{[n]}^{-1} A$ is invertible. This makes $\U$ the complement of a hyperplane, since $\det(S_{[n]}^{-1} A)=0$ is a linear equation in the Pl\"ucker coordinates of $V$. Now let $V \in \U$ and suppose that $\tau(v_1) = v_2$ with $v_1, v_2 \in V$. Let $A$ be a basis of $V$ and write $v_i = A \xi_i$. Then the relation $TA \xi_1 = A \xi_2$ implies $S_{[n]}^{-1} A \xi_1 = S_{[n]}^{-1} A \xi_2$ and so $\xi_1 = \xi_2$.


\section{Proof of Theorem \ref{thm:permutations}}

\noindent We first need two lemmas.

\begin{lem} \label{lem:single-complete-cycle}
Let $\Pi$ be an $\ell \times \ell$ permutation matrix consisting of a single cycle, and let $\Sigma$ be an $\ell \times \ell$ diagonal matrix with its diagonal entries taking values in $\{1,-1\}$. Let $\Q$ be the ideal generated by the $2 \times 2$ determinants of the matrix $[\Sigma \Pi z \, \, \, z]$ over the $\ell$-dimensional polynomial ring $k[z] = k[z_1,\dots,z_{\ell}]$. Then $\height (\Q) = \ell -1$. 
\end{lem}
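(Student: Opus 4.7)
The strategy is to identify the zero locus of $\Q$ in $\bar{k}^\ell$ with the union of the eigenspaces of the endomorphism $\Sigma\Pi$ of $\bar{k}^\ell$, and then to show each such eigenspace is one-dimensional.

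Since heights are preserved under the faithfully flat extension $k[z]\hookrightarrow \bar{k}[z]$, I would first pass to $\bar{k}$. A point $z\in \bar{k}^\ell$ belongs to $\Spec(\bar{k}[z]/\Q\bar{k}[z])$ precisely when $\Sigma\Pi z$ and $z$ are $\bar{k}$-linearly dependent, that is, when $z$ lies in an eigenspace $E_{\Sigma\Pi,\lambda}$. Therefore
\[
\height(\Q)=\ell-\max_{\lambda\in\bar{k}}\dim_{\bar{k}}E_{\Sigma\Pi,\lambda},
\]
and it suffices to show every eigenspace of $\Sigma\Pi$ over $\bar{k}$ is one-dimensional.

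To this end I would show $\Sigma\Pi$ is \emph{nonderogatory}, i.e., that its minimal polynomial coincides with its characteristic polynomial. After relabeling the canonical basis so that $\Pi$ acts as the cycle $e_i\mapsto e_{i+1\bmod\ell}$, we have $(\Sigma\Pi)(e_i)=\sigma_{i+1\bmod\ell}\,e_{i+1\bmod\ell}$. Setting $D=\Diag(d_1,\ldots,d_\ell)$ with $d_1=1$ and $d_{i+1}=d_i\sigma_{i+1}$ for $i<\ell$, a direct computation gives $(D^{-1}\Sigma\Pi D)(e_i)=e_{i+1}$ for $i<\ell$ and $(D^{-1}\Sigma\Pi D)(e_\ell)=c\,e_1$, where $c:=\sigma_1\cdots\sigma_\ell\in\{\pm1\}$. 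This is the companion matrix of $x^\ell-c$, whose minimal polynomial equals its characteristic polynomial $x^\ell-c$. Invoking the primary cyclic decomposition recalled in \S\ref{subsection:Proof-prp:tau}, $\Sigma\Pi$ then admits exactly one Jordan block per eigenvalue, so each eigenspace over $\bar{k}$ is one-dimensional. Combined with the existence of at least one root of $x^\ell-c$ in $\bar{k}$, this yields $\height(\Q)=\ell-1$. The only substantive step is the explicit diagonal conjugation exhibiting $\Sigma\Pi$ as a companion matrix; nonderogatory-ness and the height bound then follow from standard linear algebra.
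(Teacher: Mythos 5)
Your proof is correct and follows the same overall strategy as the paper: pass to $\bar{k}$, identify the vanishing locus of $\Q$ with the union of the eigenspaces of $\Sigma\Pi$, and conclude by showing each eigenspace is one-dimensional. The only real divergence is in that last step. The paper argues that the eigenvalues of $\Sigma\Pi$ are the $\ell$ \emph{distinct} roots of $x^\ell=\sigma_1\cdots\sigma_\ell$, so that $\Sigma\Pi$ is diagonalizable with one-dimensional eigenspaces; you instead conjugate $\Sigma\Pi$ by an explicit diagonal matrix to the companion matrix of $x^\ell-c$ and invoke nonderogatory-ness. Your route is marginally longer but strictly more robust: when $\operatorname{char}(k)$ divides $\ell$ the roots of $x^\ell-c$ are \emph{not} distinct and $\Sigma\Pi$ need not be diagonalizable, so the paper's "distinct roots" argument silently assumes $\operatorname{char}(k)\nmid\ell$, whereas the companion-matrix argument still yields $\dim_{\bar k}E_{\Sigma\Pi,\lambda}=1$ for every eigenvalue and hence $\height(\Q)=\ell-1$ in all characteristics. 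Your diagonal conjugation checks out ($d_\ell\sigma_1=\sigma_1\cdots\sigma_\ell=c$), and your reduction $\height(\Q)=\ell-\max_\lambda\dim_{\bar k}E_{\Sigma\Pi,\lambda}$ is valid since the zero locus is a finite union of linear subspaces and $\bar k$ guarantees at least one eigenvalue exists.
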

\begin{proof}
Note that the height of $\Q$ is the same as the height of the extension of $\Q$ in $\bar{k}[z]$, so we may assume that $k = \bar{k}$. Let $Y \subset \bar{k}^\ell$ be the vanishing locus of $\Q$. Clearly $v \in Y$ if and only if $v$ is an eigenvector of $\Sigma\Pi$. Hence $Y$ is the union of the eigenspaces of $\Sigma \Pi$, the latter being the irreducible components of $Y$. With $\sigma_i \in \{1,-1\}$ the $i$-th diagonal element of $\Sigma$, the eigenvalues of $\Sigma \Pi$ are the $\ell$ distinct roots of the equation $x^{\ell}=\sigma_1 \cdots \sigma_{\ell}$. Hence $\Sigma\Pi$ is diagonalizable with $\ell$ distinct eigenvalues, i.e., each eigenspace has dimension $1$. Thus $Y$ has pure dimension $1 = \dim Y = \dim \bar{k}[z]/I_{\Sigma \Pi}$ whence $\height(\Q) = 1$.
\end{proof}

\begin{lem} \label{lem:incomplete-cyles-codimension}
Let $\Pi$ be an $m \times m$ permutation matrix consisting of $c$ cycles and $\Sigma$ an $m \times m$ diagonal matrix with diagonal entries taking values in $\{1,-1\}$. For every $i \in [c]$ let $\I_i \subset [m]$ be the indices that are cycled by cycle $i$. Let $\bar{\I} \subset [m]$ such that $\#\bar{\I} \ge 2$ and $\I_i \not\subset \bar{\I}$ for every $i \in [c]$. Let $\Q$ be the ideal generated by the $2 \times 2$ determinants of the row-submatrix $\Phi$ of $[x \, \, \, \Sigma \Pi x]$ indexed by $\bar{\I}$. Viewing $\Q$ as an ideal of the polynomial ring over $k$ in the indeterminates that appear in $\Phi$, we have that $\height(\Q) = \#\bar{\I}-1$. 
\end{lem}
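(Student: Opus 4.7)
The plan is to compute $\dim Y$ directly, where $Y \subset \bar{k}^N$ is the affine variety cut out by $\Q$ and $N = |B \cup \pi(B)|$ with $B := \bar{\I}$, working over $\bar{k}$ since heights are preserved under the faithfully flat extension $k \hookrightarrow \bar{k}$. The key combinatorial input is that the hypothesis $\I_i \not\subset \bar{\I}$ for every cycle means that inside each cycle of $\pi$, the indices in $B$ together with the edges $i \mapsto \pi(i)$ induce a directed subgraph which is a disjoint union of directed paths (arcs) rather than a full cycle; write $p$ for the total number of arcs across all cycles. Each arc of length $\ell$ contributes $\ell$ vertices to $B$ together with a single ``exit vertex'' in $\pi(B)\setminus B$, so $N = |B| + p$ and there are $|B| - p$ ``internal edges'' (those $i\in B$ with $\pi(i)\in B$).

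Next I parameterize $Y$. A point $x$ lies in $Y$ precisely when the matrix $\Phi(x)$ has rank at most $1$; for non-zero such $x$ there exist $(a,b)\in\bar{k}^2\setminus\{0\}$ and $\lambda_i \in \bar{k}$ with $x_i = \lambda_i a$ and $x_{\pi(i)} = \sigma_i \lambda_i b$ for every $i \in B$. Whenever $i$ and $\pi(i)$ are both in $B$, consistency forces the relation $\lambda_{\pi(i)}\, a = \sigma_i \lambda_i b$. On the open chart $a=1$ this becomes $\lambda_{\pi(i)} = \sigma_i b\,\lambda_i$, and because the induced subgraph on $B$ is a forest one can propagate along each arc to express every $\lambda_i$ as a polynomial in $b$ and in one chosen free lambda per arc. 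Hence this chart of $Y$ is the image of a morphism $\phi : \bar{k}^{p+1} \to \bar{k}^N$ whose coordinates are $b$ together with the $p$ free lambdas, and $\phi$ is generically injective: the free lambdas are recovered as $x$-values at the starting vertex of each arc, and $b$ is recovered as the ratio $\sigma_i x_{\pi(i)}/x_i$ along any arc. Thus this chart has dimension exactly $p+1$. The complementary $a=0$ chart forces $x_i = 0$ for every $i \in B$ and parameterizes only the $x_j$ for $j \in \pi(B)\setminus B$, contributing just $p$ dimensions.

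Putting these together gives $\dim Y = p+1$, and therefore $\height(\Q) = N - (p+1) = |B| - 1 = \#\bar{\I} - 1$, as claimed. The most delicate point is confirming that the $|B|-p$ consistency equations do not accidentally cut out more than expected, so that $\dim Y$ is not strictly less than $p+1$; this is handled precisely by the forest structure, which allows solving the equations triangularly arc by arc. A secondary subtlety is establishing the generic injectivity of the parameterization, which is exactly the ratio computation above and confirms that the $a=1$ chart contributes all $p+1$ dimensions rather than fewer.
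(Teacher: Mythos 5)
Your proof is correct, but it follows a genuinely different route from the paper. The paper's argument is algebraic: it splits $\bar{\I}$ into the blocks coming from the arcs of each cycle (your $p$ arcs are its $s$ blocks), views $\Phi$ as a specialization of the generic $\#\bar{\I}\times 2$ matrix of indeterminates, and uses Cohen--Macaulayness of the generic determinantal ring to argue that the $\#\bar{\I}-s$ specializing linear forms are a regular sequence --- the key trick being that appending $s$ further linear forms reduces to the single-cycle case (the preceding lemma), whose dimension is known to be $1$, so the full sequence of $\#\bar{\I}$ forms drops the dimension from $\#\bar{\I}+1$ to $1$ and is therefore regular. You instead compute $\dim V(\Q)$ directly by exhibiting a generically injective parameterization of the rank-$\le 1$ locus by $\bar{k}^{p+1}$ (this is essentially the scroll parameterization alluded to in the paper's remark following the lemma), using the arc/forest structure guaranteed by $\I_i\not\subset\bar{\I}$ to solve the consistency equations triangularly; together with the $p$-dimensional piece on the $a=0$ chart and the catenarity of polynomial rings over a field, this gives $\height(\Q)=N-(p+1)=\#\bar{\I}-1$. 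Your approach is more elementary and self-contained (it needs neither the single-cycle lemma nor Cohen--Macaulayness of determinantal rings), and it makes the geometry transparent; the paper's approach yields more as a byproduct (the quotient by $\Q$ is Cohen--Macaulay, being a quotient of the generic determinantal ring by a regular sequence), though only the height is needed here. The two delicate points you flag --- that the forest structure prevents the consistency equations from overdetermining the system, and that generic injectivity pins the chart's dimension at exactly $p+1$ --- are exactly the right ones, and both are handled adequately.
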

\begin{proof}
Let $\Phi=[x \, \, \, \Sigma \Pi x]_{\bar{\I}}$ be the said submatrix. Let $r \in [c]$ be such that $\bar{\I} \cap \I_r \neq \emptyset$. Since $\I_r \not\subset \bar{\I}$, we can partition $\bar{\I} \cap \I_r$ into subsets $\bar{\I}_{r j}$ for $j \in [s_r]$ for some $s_r$, such that each $\Phi_{r j} = [x \, \, \, \Sigma \Pi x]_{\bar{\I}_{r j}}$ has up to a permutation of the rows the form 
\begin{align}
\Phi_{rj} = 
\begin{bmatrix}
x_{\alpha} & \sigma_{\beta} x_{\beta} \\
x_{\alpha+1} & \sigma_{\alpha}x_{\alpha} \\
\vdots & \vdots \\
x_{\alpha+\ell-2} & \sigma_{\alpha+\ell-3} x_{\alpha+\ell-3} \\
x_{\gamma} & \sigma_{\alpha+\ell-2} x_{\alpha+\ell-2}
\end{bmatrix}  \nonumber
\end{align} Here $\sigma_i \in \{1,-1\}$ and $x_{\alpha},\dots,x_{\alpha+\ell-2},x_{\beta},x_{\gamma}$ are distinct variables appearing only in $\Phi_{rj}$. Note that there is a total of $s=\sum_{\bar{\I}_r \neq \emptyset} s_r$ blocks $\Phi_{rj}$ and a total of $\#\bar{\I}+s$ distinct indeterminates appearing in $\Phi$. Let $T$ be the general determinantal ring over $k$ of $2 \times 2$ determinants of a $\#\bar{\I} \times 2$ matrix of variables. Then it is very well known that $T$ is Cohen-Macaulay of dimension $\#\bar{\I}+1$ \cite{BrunsVetter:1988}. Taking quotient with $\#\bar{\I}-s$ suitable linear forms we obtain the quotient ring associated to $\Q$. Taking quotient with extra $s$ linear forms we can obtain the quotient ring of an ideal of the form appearing in Lemma \ref{lem:single-complete-cycle}. Then as per Lemma \ref{lem:single-complete-cycle} this is $1$-dimensional so that the total sequence of $\#\bar{\I}$ linear forms is a $T$-regular sequence. 
\end{proof}

\begin{remark}
Ignoring the sign matrix $\Sigma$, a geometric view of the proof of Lemma \ref{lem:incomplete-cyles-codimension} is the following. When $k=\bar{k}$ the ideal $\Q$ corresponds to a rational normal scroll of dimension $s+1$.  Then we take a sequence of $s$ hyperplane sections of that scroll, each time getting a new scroll of dimension one less until the scroll degenerates to the union of eigenspaces of a cyclic permutation. See \cite{ConcaFaenzi:2017} for a complete classification of rational normal scrolls that arise as hyperplane sections of rational normal scrolls, see also
\cite{CatalanoJohnson:1997} for the free resolution of ideals of $2 \times 2$ determinants of a matrix of linear forms with two columns. 
\end{remark} 

It is enough to bound as claimed the dimension of $U_{\rho_2 \rho_1 \pi,\rho_2}$ where $\pi$ is some permutation. Since the dimension of locally finite type $k$-schemes is preserved under any field extension (exercise 11.2.J in \cite{Vakil-AG}) we may assume that $k = \bar{k}$. Let $R_1, R_2, \Pi$ be matrix representations of $\rho_1,\rho_2, \pi$ on the canonical basis of $k^m$. For a closed point $v \in U_{\rho_2 \rho_1 \pi,\rho_2}$ we have $R_2 R_1 \Pi v = \lambda R_2 v$ for some $\lambda \neq 0,1$. For $i=1,2$, let $I_i \subset [m]$ be the indices that correspond to $\im(R_i)$, and similarly $K_i$ the indices that correspond to $\ker(R_i)$. If $i \in I_2 \cap K_1$, then it is clear that $v_i$ must be zero, because $\lambda \neq 0$. If $\pi(i) \in I_2 \cap K_1$, then we must also have $v_{\pi(i)}=0$ for the same reason. If $\pi(i) \in I_2 \cap I_1$, then again $v_{\pi(i)}=0$ because we already have $v_i = 0$ and $\lambda \neq 0$. This \emph{domino effect} either forces $v$ to be zero in the entire orbit of $i$, or until an index $j$ in the orbit of $i$ is reached such that $\pi(j) \in K_2 \cap K_1$. Let $I_{\text{domino}} \subset I_2 $ be the coordinates of $v$ that are forced to zero by the union of the domino effects for every $i \in I_2 \cap K_1$. Clearly $ I_2 \setminus I_{\text{domino}} \subset I_2 \cap I_1$. Let $i \in I_2 \setminus I_{\text{domino}}$; if it so happens that $\pi(i) = i$, then we must have that $v_i = 0$ because $\lambda \neq 1$. Consequently the coordinates of $v$ that correspond to fixed points of $\pi$ and lie in $I_2 \setminus I_{\text{domino}}$ must be zero. Letting $I_{\text{fixed}} \subset I_2 \setminus I_{\text{domino}}$ be the set containing these indices, $v$ must lie in the linear variety defined by the vanishing of the coordinates indexed by $I_{\text{domino}} \cup I_{\text{fixed}}$. 

Next, let $\bar{\pi}_1,\dots,\bar{\pi}_{c'}$ be all the $c' \ge 0$ cycles of $\pi$ of length at least two that lie entirely in $I_2 \setminus (I_{\text{domino}} \cup I_{\text{fixed}})$. Let $C_{i} \subset [m]$ be the indices cycled by $\bar{\pi}_{i}$. Since $\lambda \neq 0$, it is clear that $v_{C_i}$ must be an eigenvector of $\bar{\pi}_i$, and so by Lemma \ref{lem:single-complete-cycle} $v_{C_i}$ must lie in a codimension-$(\#C_i-1)$ variety. Adding codimensions over $i \in [c']$, and letting $I_{\text{cycles}} = \bigcup_{i \in [c']} C_i$, we get that $v_{I_{\text{cycles}}}$ must lie in a variety of codimension $\sum_{i \in [c']}(\#C_i-1)$. Moreover, we may assume that the set $I_{\text{incomplete}} = I_2 \setminus (I_{\text{domino}} \cup I_{\text{fixed}} \cup I_{\text{cycles}})$ does not contain any complete cycles, and if $I_{\text{incomplete}} \neq \emptyset$ Lemma \ref{lem:incomplete-cyles-codimension} gives that $v_{I_{\text{incomplete}}}$ must lie in a codimension-$(\#I_{\text{incomplete}}-1)$ variety. 

Let $\Y_{\text{domino}}, \Y_{\text{fixed}}, \Y_{\text{cycles}}, \Y_{\text{incomplete}}$ be the varieties defined by the vanishing of the coordinates in $I_{\text{domino}}$, the vanishing of the coordinates in $I_{\text{fixed}}$, as well as the vanishing
of the $2 \times 2$ determinants of the matrix $[x \, \, \, \Pi x]$ indexed by 
$I_{\text{cycles}}$ and $I_{\text{incomplete}}$ respectively. Noting that these varieties are all associated with disjoint polynomial rings and that $\# I_{\text{domino}}+\#I_{\text{fixed}} +\#I_{\text{cycles}}+\#I_{\text{incomplete}}=\#I_2$, the above analysis gives that $v$ must lie in a variety $\Y = \Y_{\text{domino}} \times \Y_{\text{fixed}} \times \Y_{\text{cycles}} \times \Y_{\text{incomplete}}$ so that
\begin{align}
\codim \Y &\ge \# I_{\text{domino}} + \#I_{\text{fixed}} + \sum_{i \in [c']} (\#C_i-1) + \max\{\# I_{\text{incomplete}}-1,0\} \nonumber \\
&= \#I_2-c' - \# I_{\text{incomplete}} + \max\{\# I_{\text{incomplete}}-1,0\} \nonumber. 
\end{align} If $I_{\text{incomplete}}=\emptyset$, then $\codim \Y \ge  \# I_2 - c'$. Since $c' \le \# I_2 / 2$, we have that $\codim \Y \ge  \# I_2 / 2 \ge \floor*{\# I_2/2}$. If on the other hand $I_{\text{incomplete}} \neq \emptyset$, then $c' \le \floor*{ (\#I_2-1) / 2}$, so that $\codim \Y \ge \#I_2 - \floor*{ (\#I_2-1) / 2}- 1 \ge \floor*{\# I_2/2}$, with the last inequality separately verified for $\#I_2$ odd or even. 

\section{Proof of Corollary \ref{cor:signed-permutations}}
If $\rho_1 \in \sR_n$ and $\rho_2 \in \sR_{2n}$, then the claim is a direct corollary of Theorems \ref{thm:tau1-tau2} and \ref{thm:permutations}. Otherwise, a similar set of arguments as in the proof of Theorem \ref{thm:permutations} establishes that
$\dim \U_{\rho_2 \rho_1 \pi_1, \rho_2 \pi_2}^{\pm} \le m-\floor*{\rank(\rho_2)/2}$, where now
$\U_{\rho_2 \rho_1 \pi_1, \rho_2 \pi_2}^{\pm} = \U_{\rho_2 \rho_1 \pi_1, \rho_2 \pi_2} \setminus \ker(\rho \tau_1 + \tau_2)$. Moreover, an identical argument as in the end of the proof of Proposition \ref{prp:tau} shows that we can adjust that proposition as follows: ``Suppose 
$\dim_{\bar{k}} E_{\tau,\lambda} \le m-n$ for every $\lambda \neq 1,-1$. Then for a general $n$-dimensional subspace $V$ and $v_1,v_2 \in V$ we have $\tau(v_1) = v_2$ only if $v_1 = v_2$ or $v_1=-v_2$." Combining everything together establishes the claim.

\section{Proof of Proposition \ref{prp:generalHS}}

Let $A \in k^{m \times n}$ be a basis of $V$. If $\tau_1(v_1) = \tau_2(v_2)$ then $\tau_2(v_2) \in \tau_1(V)$ and so $\rank ([T_1A \, \, \, T_2 A \xi]) \le n$ for $\xi \in k^n$ with $v_2=A\xi$. We show that for general $V,\xi$ this can not happen unless $\tau_1=\tau_2$, in which case $v_1-v_2 \in \ker(\tau_1)$ and so $v_1 = v_2$. Suppose $\tau_1 \neq \tau_2$. We show the existence of $A,\xi$ such that $\rank ([T_1A \, \, \, T_2 A \xi]) = n+1$. Since $\tau_1 \neq \lambda \tau_2$ for all $\lambda \in k$, there exists some $v \in k^m$ such that $\tau_1(v),\tau_2(v)$ are linearly independent. Let $W = \Span\big(\tau_1(v),\tau_2(v)\big)$. Since $\rank(\tau_1) \ge n+1$, any complement $C$ of $W \cap \im(\tau_1)$ in $\im(\tau_1)$ has dimension at least $n-1$. Let $C_1$ be a subspace of $C$ of dimension $n-1$. Let $V_1$ be a subspace of $\tau_1^{-1}(C_1)$ of dimension $n-1$ such that $C_1 = \tau_1(V_1)$. Then for $V = V_1 + \Span(v)$ we have $\dim\big(\tau_1(V) + \tau_2(v)\big)=n+1$.

\bibliographystyle{humannat}
\bibliography{DCHS-arxiv-16Dec20}

\end{document}